\newtheorem{theorem}{Theorem}
\newtheorem{lemma}{Lemma}
\newtheorem{remark}{Remark}
\newcommand{\leqnomode}{\tagsleft@true}
\newcommand{\reqnomode}{\tagsleft@false}
\def\({\begin{eqnarray}}
\def\){\end{eqnarray}}
\def\[{\begin{eqnarray*}}
\def\]{\end{eqnarray*}}
\def\part#1#2{\frac{\partial #1}{\partial #2}}
\def\R{\mathbb{R}}
\def\N{\mathbb{N}}
\def\d{\mathrm{d}}
\def\tot#1#2{\frac{\d #1}{\d #2}}
\def\eps{\varepsilon}
\def\wtx{\widetilde x}
\def\bPsi{{\widetilde\Psi}}
\def\bPsii{\bPsi^i}
\def\bPhi{{\widetilde\Phi}}
\def\bPhii{\bPhi^i}
\def\upsi{{\underline\psi}}
\begin{document}

\title{Asymptotic behavior of the linear consensus model with delay and anticipation}   
\author{Jan Haskovec\footnote{Computer, Electrical and Mathematical Sciences \& Engineering, King Abdullah University of Science and Technology, 23955 Thuwal, KSA.
jan.haskovec@kaust.edu.sa}}

\date{}

\maketitle

\begin{abstract}
We study asymptotic behavior of solutions of the first-order linear consensus model
with delay and anticipation, which is a system of neutral delay differential equations.
We consider both the transmission-type and reaction-type delay that
are motivated by modeling inputs.
Studying the simplified case of two agents, we show that,
depending on the parameter regime,
anticipation may have both a stabilizing and destabilizing effect on the solutions.
In particular, we demonstrate numerically that moderate level of anticipation generically
promotes convergence towards consensus, while too high level disturbs it.
Motivated by this observation, we derive sufficient conditions
for asymptotic consensus in the multiple-agent systems,
which are explicit in the parameter values delay length and anticipation level,
and independent of the number of agents.
The proofs are based on construction of suitable Lyapunov-type functionals.
\end{abstract}
\vspace{2mm}

\textbf{Keywords}: Asymptotic consensus, long-time behavior, delay, anticipation, neutral functional differential equations.
\vspace{2mm}


\textbf{2010 MR Subject Classification}: 34K05, 82C22, 34D05, 92D50.
\vspace{2mm}

\section{Introduction}\label{sec:Intro}

In this paper we study asymptotic behavior of the linear consensus model
consisting of a group of $N\in\N$ agents, each of them having their own opinion
or assessment of a certain quantity, represented by the real vector
$x_i\in\R^d$, $i\in [N]$, with $d\geq 1$ the space dimension.
Here and in the sequel we denote the set $[N] := \{1, 2, \ldots,N\}$.
Each of the agents communicates with all others and
revises its own opinion based on a weighted average of
the perceived other agents' opinions.
If this is done continuously in time,
we have $x_i=x_i(t)$ for $t\geq 0$ and
\(  \label{eq:0}
   \dot x_i(t) = \sum_{j=1}^N \psi_{ij}(t) (\xi_j(t) - \xi_i(t)).
\)
Here $\xi_j=\xi_j(t)$ represents the information
that at time $t>0$ is available about agent $j$'s opinion
to all other agents.
The communication weights $\psi_{ij}=\psi_{ij}(t)$ measure the intensity
of the influence between agent $i$ and agent $j$.
The choice $\xi_j(t) := x_j(t)$ for all $j\in [N]$
and $\psi_{ij}(t) := \psi(|x_i(t) - x_j(t)|)$
gives the very well studied Hegselmann-Krause system \cite{HK},
which is generic for modeling self-organized consensus
in systems of interacting agents
\cite{Camazine, Castellano, Jadbabaie, Krugman, Naldi, Vicsek, Xu}.

For many applications in biological and socio-economical systems or control problems (for instance, swarm robotics \cite{Hamman, E3B, Valentini}),
it is natural to include a time delay (lag) in the model reflecting the time needed for each agent to receive information from other agents,
and/or to react to it. In this paper we also assume that the agents are able to anticipate the action of their conspecifics
by measuring and extrapolating the momentary rate of change of their state vectors.
However, this information is as well subject to the time lag.
In general, the length of the delay $\tau$ may depend both on the state of the system
and on the external conditions, i.e., it may be different for each pair $(i,j)$, it may vary in time
or be random, following a certain distribution.
For simplicity, in this paper we consider a fixed, globally constant delay $\tau>0$.
From the modeling point of view, it is reasonable to consider the following two types of systems.

\begin{itemize}
\item
\textbf{System without self-delay (transmission-type delay),}
which reflects the situation when each agent receives information from its surroundings
with a certain time lag due to finite speed of information transmission and/or perception.
Therefore, agent $i$ with opinion $x_i(t)$ receives at time $t>0$ the information about the opinion of agent $j$
in the form $x_j(t-\tau)$. Also, it is able to measure the rate of change of agent $j$'s opinion,
which it receives as $\dot x_j(t-\tau)$, and extrapolates.
Consequently, we have
\[
   \xi_j(t) := x_j(t-\tau) + \lambda\tau \dot x_j(t-\tau),\qquad   \xi_i(t) := x_i(t),
\]
and \eqref{eq:0} turns into
\( \label{eq:prop}
   \dot x_i(t) = 
     \sum_{j\neq i} \psi_{ij}(t) \bigl( x_j(t-\tau) + \lambda\tau \dot x_j(t-\tau) - x_i(t)\bigr), \qquad i\in [N].
\)
The parameter $\lambda\geq 0$ denotes the strength of anticipation. The value $\lambda=1$
corresponds to the Taylor expansion of the smooth curve $x_j=x_j(t)$ at time $t-\tau$
to first order; $\lambda>1$ corresponds to (over)extrapolation, while $\lambda<1$
can be seen as interpolation between $x_j(t-\tau)$ and $x_j(t-\tau) + \tau\dot x_j(t-\tau)$.
Note that the summation in \eqref{eq:prop} runs through all $j\in [N]$ such that $j\neq i$.
This prevents $x_i$ from interacting with its own opinion from the past,
which obviously is not justified from the modeling point of view.

\item
\textbf{System with self-delay (reaction-type delay),}
which takes into account the time needed for the agents to react to the information that they already received.
In this case, the agent $i$'s reaction executed at time $t$
is based upon the state of the system, including agent $i$'s own state, and its rate of change, at time $t-\tau$.
We thus have
\[
   \xi_j(t) := x_j(t-\tau) + \lambda\tau \dot x_j(t-\tau),\qquad   \xi_i(t) := x_i(t-\tau) + \lambda\tau \dot x_i(t-\tau),
\]
and \eqref{eq:0} takes the form
\( \label{eq:react}
   \dot x_i(t) = 
     \sum_{j=1}^N \psi_{ij}(t) \bigl( x_j(t-\tau) + \lambda\tau \dot x_j(t-\tau) - x_i(t-\tau) - \lambda\tau \dot x_i(t-\tau)\bigr), \qquad i\in [N].
\)
\end{itemize}
In both cases the system is equipped with the initial datum
\(\label{IC}
   x_i(t) = x^0_i(t),\qquad i\in [N], \quad t \in [-\tau,0],
\)
with prescribed continuously differentiable trajectories $x^0_i\in{C}^1([-\tau,0])$, $ i\in [N]$.
We note that both systems \eqref{eq:prop} and \eqref{eq:react}
fall into the class of neutral delay differential equations, see, e.g. \cite[Chapter 9]{Hale-book}.

As already noted, in the classical Hegselmann-Krause bounded confidence
model the communication weights $\psi_{ij}=\psi_{ij}(t)$
depend nonlinearly on the dissimilarity $|x_i(t) - x_j(t)|$ of the opinions
of the respective agents.
In this paper we resort to prescribing constant communication weights
$\psi_{ij}$ for all $i,j\in [N]$.
This restriction, not unusual in previous literature \cite{E3A, E4, E6},
is motivated by the analytical approach that we shall employ.
It is based on constructing Lyapunov functionals involving,
for the system \eqref{eq:prop}, terms of the type
\[
   x_i - \lambda\tau\sum_{j\neq i} \psi_{ij} x_j(t-\tau).
\]
The time derivative of this term along the solutions of \eqref{eq:prop} reads
\[
   \tot{}{t} \left( x_i - \lambda\tau\sum_{j\neq i} \psi_{ij} x_j(t-\tau) \right)
      = \sum_{j\neq i} \psi_{ij} \bigl( x_j(t-\tau) - x_i(t)\bigr),
\]
and the key observation is that it consists of derivative-free terms only,
which can be suitably estimated.
Similarly, for \eqref{eq:react} we shall use
\[
   \tot{}{t} \left( x_i - \lambda\tau\sum_{j\neq i} \psi_{ij} \bigl(x_j(t-\tau)-x_i(t-\tau) \bigr) \right)
      = \sum_{j\neq i} \psi_{ij} \bigl( x_j(t-\tau) - x_i(t-\tau)\bigr).
\]
However, the systems \eqref{eq:prop} and \eqref{eq:react} with constant
communication weights can be seen as linearizations of the respective
nonlinear systems about their steady states.
The sufficient conditions for the global asymptotic consensus
in the linear systems that we shall present in Section \ref{sec:main}
below can then be seen as conditions for local asymptotic stability 
of the respective nonlinear steady states.
Further structural assumptions on the properties of the matrix $(\psi_{ij})_{i,j\in [N]}$
shall be specified in Section \ref{sec:main}.

The main question in the context of consensus formation models
of the type \eqref{eq:0} is whether the dynamics of continuously evolving opinions
tends to an (asymptotic) emergence of one or more opinion clusters \cite{JM} formed
by agents with (almost) identical opinions.
In case of global communications (i.e., $\psi_{i,j}>0$ for all ${i,j\in [N]}$),
one is interested in \emph{global consensus}, which is the state where
all agents have the same opinion $x_i=x_j$ for all $i,j \in [N]$.
For the Hegselmann-Krause model with delay and its various modifications,
this question has been studied extensively in numerous works,
see, e.g., \cite{CPP, Has-ProcAMS, Has-BLMS, Has-SIADS, Liu-Wu, Lu, E2, Paolucci, Pignotti-Trelat, E3A, E4, E6}.
Anticipation in the context of collective dynamics was introduced in \cite{Shu-Tadmor},
however, without delay or time lag.
The authors studied large-time behavior of systems driven by radial potentials, which
react to anticipated positions. 
As a special case, they considered the Cucker-Smale model
of alignment \cite{CS1, CS2}, proving the decisive role of anticipation
in driving such systems with attractive potentials into velocity alignment
and spatial concentration. They also studied the concentration effect near equilibrium
for anticipation-based dynamics of pair of agents governed by attractive--repulsive
potentials.

However, to our best knowledge,
none of the previous works considered the first-order consensus model
of the type \eqref{eq:0} with delay and anticipation,
i.e., no results are found in the literature concerning the asymptotic behavior
of the systems \eqref{eq:prop} and \eqref{eq:react} or their variants.
The main goal of this paper is to fill this gap.
It is known that the presence of delay generically causes destabilization of solutions
through appearance of damped or undamped oscillations \cite{Hale-book, Smith, Gyori-Ladas}.
One may then conjecture that anticipation could oppose this effect
by re-establishing stability in certain parameter regimes,
which in the context of the systems \eqref{eq:prop}, \eqref{eq:react}
means promotion of convergence of solutions towards global consensus.
To gain insights into this hypothesis, we first study the simplified case
of two-agent systems, where \eqref{eq:prop} and \eqref{eq:react} reduce
reduce to single neutral delay differential equations.
Combining known analytical results and numerical simulations,
we shall demonstrate that moderate levels of anticipation indeed
promote stability. In particular, for the system \eqref{eq:prop} with transmission-type delay,
moderate anticipation reduces the amplitude of the oscillations of the solutions.
For the system \eqref{eq:react} with reaction-type delay 
we shall numerically identify a parameter range where anticipation 
promotes stability in otherwise unstable solutions through a Hopf-type bifurcation.
Noting that anticipation may well have a destabilizing effect on the dynamics
motivates the search for sufficient conditions that guarantee
convergence towards global consensus in the systems \eqref{eq:prop} and \eqref{eq:react}.
We shall provide such conditions explicitly in the parameter values
$\lambda\geq 0$ and $\tau\geq 0$ in the respective systems with constant communication weights.
These results, formulated as Theorems \ref{thm:prop} and \ref{thm:react} below,
are, to our best knowledge, new.
The proofs will be based on construction of suitable Lyapunov functionals
and, despite the linearity of the considered systems,
highly nontrivial due to the presence of anticipation and delay terms.

This paper is organized as follows. In Section \ref{sec:main}
we present our main results on the sufficient conditions for global asymptotic consensus
for the systems \eqref{eq:prop} and \eqref{eq:react},
discussing in detail the assumptions on the communication weights that we adopt.
In Section \ref{sec:N2} we consider the simplified setting with two agents only, $N=2$,
where \eqref{eq:prop} and \eqref{eq:react} reduce to single neutral delay differential equations.
We then provide insights into the dynamics of both systems by discussing known analytical results
and presenting results of numerical simulations.
In Section \ref{sec:prop} we provide the proof of our main result on global asymptotic consensus
for the transmission-type delay system \eqref{eq:prop}, and
in Section \ref{sec:react} we provide the proof for the reaction-type delay system \eqref{eq:prop}.

\section{Assumptions and main results}\label{sec:main}
In this section we introduce the particular assumptions
on the communication weights $\psi_{ij}$ appearing in the systems
\eqref{eq:prop} and \eqref{eq:react}.
As explained in the Introduction, we shall work with fixed, time-independent
weights in this paper, which facilitates analysis of the asymptotic behavior of the solutions
using Lyapunov-type functionals. As the the mathematical properties of the systems
\eqref{eq:prop} and \eqref{eq:react} differ, we shall introduce two different sets
of assumptions on $\psi_{ij}$ below. Then, we shall formulate two theorems providing
sufficient conditions for the asymptotic stability of the trivial steady state,
i.e., sufficient conditions for reaching asymptotic consensus.

Let us remark that both \eqref{eq:prop} and \eqref{eq:react}
are instances of linear neutral functional differential equations.
Consequently, by standard results, see, e.g., \cite[Chapter 9]{Hale-book},
they possess unique global solutions
subject to the continuously differentiable initial datum \eqref{IC}.

For later use let us define the mean opinion
\(  \label{def:mean}
    X(t) := \frac{1}{N} \sum_{i=1}^N x_i(t)
\)
and the maximum opinion discord
\(  \label{def:dx}
   d_x(t) := \max_{i,j \in [N]} |x_i(t) - x_j(t)|.
\)
Clearly, global consensus corresponds to $d_x=0$.

\subsection{Transmission-type delay} 
For the system \eqref{eq:prop} without self-delay, we assume that
the matrix $(\psi_{ij})_{i,j\in [N]}$ of communication weights is row-stochastic in the sense
\(  \label{psi:conv}
   \sum_{j\neq i} \psi_{ij} = 1 \qquad\mbox{for all } i\in [N].
\)
As the diagonal elements $\psi_{ii}$ do not appear in \eqref{eq:prop},
we may set them to zero and treat the whole matrix $(\psi_{ij})_{i,j\in [N]}$
as row stochastic. Moreover, we assume that all non-diagonal elements
are strictly positive,
\(   \label{psi:pos}
   \psi_{ij} > 0 \qquad\mbox{for all } i\neq j.
\)
Let us observe that the system \eqref{eq:prop}, in general,
does not conserve the mean \eqref{def:mean},
and even an eventual symmetry of the matrix $(\psi_{ij})_{i,j\in [N]}$,
which we however do \emph{not} assume in the sequel,
would not guarantee conservation of the mean.
Consequently, the global consensus value, if reached, cannot be
easily inferred from the initial datum and can be seen
as an emergent property of the system.
As we shall see in Section \ref{sec:prop},
this also brings some additional difficulties
for the analysis of the asymptotic stability of the global consensus state.

Furthermore, we assume that there exists some $T_0>0$ and fixed indices $I,J \in [N]$ such that
\(  \label{IJ}
   d_x(t) = |x_{I}(t) - x_{J}(t)| \quad\mbox{for } t\geq T_0.
\)
Admittedly, the validity of this assumption is difficult to examine a priori,
although our extensive numerical investigations suggest its generic plausibility.
The necessity for adopting this assumption stems from the form
of the Lyapunov functional that we shall construct in order to study
the asymptotic behavior of system \eqref{eq:prop} and will be explained
in detail in Remark \ref{rem:IJ} of Section \ref{sec:prop}.

\begin{theorem}\label{thm:prop}
Let $N\geq 3$ and let the matrix $(\psi_{ij})_{i,j\in [N]}$ of communication weights satisfy
the row stochasticity \eqref{psi:conv} and positivity \eqref{psi:pos}.
Let the assumption \eqref{IJ} be satisfied and let
\(   \label{cond:thm:prop}
   \lambda\tau \leq 1.
\)
Then all solutions of \eqref{eq:prop} subject to the initial datum \eqref{IC}
converge to a consensus state, i.e.,
$\lim_{t\to\infty} d_x(t) = 0$.
\end{theorem}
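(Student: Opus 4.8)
The plan is to remove the neutral (delayed-derivative) terms by passing to the shifted variables
\[
   y_i(t) := x_i(t) - \lambda\tau\sum_{j\neq i}\psi_{ij}\, x_j(t-\tau), \qquad i\in[N],
\]
for which, as recorded in the Introduction, the dynamics becomes derivative-free, namely $\tot{}{t} y_i(t) = \sum_{j\neq i}\psi_{ij}(x_j(t-\tau) - x_i(t))$. Since the row stochasticity \eqref{psi:conv} gives $\sum_{j\neq i}\psi_{ij}=1$, this simplifies to $\tot{}{t} y_i = \bar x_i - x_i$, where $\bar x_i(t):=\sum_{j\neq i}\psi_{ij}x_j(t-\tau)$. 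As $\lambda\tau\le 1$, the map $x\mapsto y$ is a mild perturbation of the identity. I would first use it, together with the global existence of solutions to \eqref{eq:prop} subject to \eqref{IC}, to establish an a priori bound on $d_x$; this guarantees boundedness of all quantities below and of the $\dot x_i$, hence uniform continuity of $d_x$, which is needed for the final limiting argument. Note that since the mean \eqref{def:mean} is not conserved and the consensus value is emergent, I track $d_x$ directly rather than deviations from a fixed barycenter.

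The purpose of assumption \eqref{IJ} is to allow differentiation of the discord without meeting the non-smoothness of the maximum. For $t\ge T_0$ set $u(t):=x_I(t)-x_J(t)$, so that $d_x(t)=|u(t)|$, and $w(t):=y_I(t)-y_J(t)$. Writing $A(t):=\bar x_I(t)-\bar x_J(t)$, the identity $\tot{}{t}y_i=\bar x_i-x_i$ and $w=u-\lambda\tau A$ give
\[
   \tot{}{t} w = A - u = -w + (1-\lambda\tau)\,A .
\]
Because $\bar x_I$ and $\bar x_J$ are convex combinations of the delayed opinions $\{x_j(t-\tau)\}_{j}$, both lie in their convex hull, whose diameter is $d_x(t-\tau)$; hence
\[
   |A(t)| \le d_x(t-\tau).
\]

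On this reduced system I would seek a Lyapunov functional of the form
\[
   \mathcal{L}(t) = |w(t)| + (1-\lambda\tau)\int_{t-\tau}^{t} d_x(s)\,\d s
\]
(or an exponentially weighted, Halanay-type variant), tuning the correction so that $\mathcal{L}$ is non-increasing precisely when $1-\lambda\tau\ge 0$, i.e. under \eqref{cond:thm:prop}. Differentiating $|w|$ along the reduced equation and using $|A|\le d_x(t-\tau)$ yields
\[
   \tot{}{t}|w(t)| \le -|w(t)| + (1-\lambda\tau)\, d_x(t-\tau),
\]
while $u=w+\lambda\tau A$ gives the transfer relation $d_x(t)\le |w(t)| + \lambda\tau\, d_x(t-\tau)$, which converts decay of $|w|$ back into decay of $d_x$.

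The main obstacle is that the crude estimate $|A|\le d_x(t-\tau)$ is only borderline: on its own it shows that $d_x$ cannot grow (in the appropriate sup-sense) but not that it tends to zero, the difficulty being most acute at $\lambda\tau=1$, where the derivative-inequality and transfer relation close only marginally and leave a non-decaying fixed point. To complete the argument I would exploit the strict positivity \eqref{psi:pos}: since $I$ and $J$ are the extreme agents, each average $\bar x_I,\bar x_J$ receives a strictly positive contribution from the opposite extreme, so over each delay window the extreme opinions are genuinely pulled inward and $|A|$ falls strictly below $d_x(t-\tau)$. Quantifying this gap uniformly is the crux, and in $\R^d$ it requires extra care because the extremal pair at time $t-\tau$ need not coincide with $(I,J)$. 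Once a strict decrease of $\mathcal{L}$ (equivalently, a finite bound on $\int_0^\infty|w|$) is secured, I would conclude $|w(t)|\to 0$ and then $d_x(t)\to 0$ via Barbalat's lemma together with the uniform continuity of $d_x$ from the first step.
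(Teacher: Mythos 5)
Your reduction is in fact the same one the paper uses: the quantity $w = \bigl( x_I - \lambda\tau \bPsi^I \bigr) - \bigl( x_J - \lambda\tau \bPsi^J \bigr)$ is exactly the variable inside the paper's Lyapunov functional \eqref{def:L} (there squared, which also sidesteps differentiating $|w|$ at zero), and the endgame via integrability plus a uniform derivative bound plus Barbalat's lemma is also the paper's. The genuine gap is the step you yourself call the crux and leave open: upgrading $|A(t)| \le d_x(t-\tau)$ to a strict, uniform contraction. The paper resolves this with Lemma \ref{lem:geom}, a Dobrushin-type geometric lemma (coefficient of ergodicity): since $\bPsi^I$ and $\bPsi^J$ are convex combinations of the delayed opinions whose off-diagonal weights are all bounded below by $\upsi := \min_{i\neq j}\psi_{ij} > 0$, one has $|\bPsi^I - \bPsi^J| \le \bigl(1-(N-2)\upsi\bigr)\, d_x(t-\tau) = \gamma\, d_x(t-\tau)$ with $\gamma<1$ precisely when $N\ge 3$. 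Crucially, this bound is against the full diameter $d_x(t-\tau)$ and holds for an arbitrary configuration, so the complication you worry about (the extremal pair at time $t-\tau$ differing from $(I,J)$) never arises. Feeding this $\gamma$ into the functional, the paper's Lemma \ref{lem:diss} yields the strict dissipation $\tot{}{t}\mathcal{L}(t) \le -(1-\gamma)(1-\lambda\tau\gamma)\, d_x(t)^2$, whose prefactor stays positive even at the endpoint $\lambda\tau=1$ --- exactly the borderline case where, as you note, your inequalities close ``only marginally''. Without a uniform $\gamma<1$, your scheme proves non-growth of $d_x$ but not decay, so as written the proposal does not establish the theorem; note also that this lemma is where the hypotheses $N\ge 3$ and \eqref{psi:pos} actually do their work.

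A secondary gap: the a priori bound on $|\dot x_i|$ (hence the uniform continuity of $d_x^2$ needed for Barbalat) is not automatic from ``$x\mapsto y$ is a mild perturbation of the identity'' together with global existence --- the equation is neutral, and the delayed-derivative terms must be controlled quantitatively. The paper proves $|x_i(t)|\le (1+\lambda\tau)M$ and $|\dot x_i(t)| \le 3(1+\lambda\tau)M$ by unrolling the neutral terms through an induction over the delay intervals $[k\tau,(k+1)\tau]$, using $\lambda\tau\le 1$ to sum the resulting geometric factors $(\lambda\tau)^k$. You would need to supply an argument of this kind (or a citation replacing it) for your first step to stand.
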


Theorem \ref{thm:prop} gives sufficient conditions for reaching global consensus
as $t\to\infty$ in the system \eqref{eq:prop} with at least three agents.
As will be discussed in Section \ref{sec:N2}, for $N=2$ asymptotic consensus
is reached if and only if $|\lambda\tau|<1$.

It is well known \cite[Theorem 2.1]{Has-SIADS} that without anticipation ($\lambda=0$),
system \eqref{eq:prop} with row-stochastic interaction matrix
reaches asymptotic consensus unconditionally,
i.e., for all initial data and for all values of the delay $\tau\geq 0$.
Consequently, anticipation may only provide the benefit
of reducing the amplitude of eventual oscillations of the solutions.
This is indeed the case for small enough values of $\lambda>0$,
as we shall demonstrate numerically in Section \ref{subsec:N2prop}.
However, choosing $\lambda$ too large leads to loss of stability.
The sufficient condition \eqref{cond:thm:prop} of Theorem \ref{thm:prop}
can therefore be understood as an explicit quantitative guarantee in terms of the parameter
values $\lambda$ and $\tau$ that stability is preserved.

\subsection{Reaction-type delay} 
The system \eqref{eq:react} with self-delay has the remarkable
property that, if the matrix of communication weights $(\psi_{ij})_{i,j\in [N]}$
is symmetric,
then \eqref{eq:react} does conserve the mean \eqref{def:mean}.
Then, if a global consensus is reached, it is determined by the initial condition
through $X(0)$.

Moreover, we assume that the matrix $(\psi_{ij})_{i,j\in [N]}$ is irreducible.
Note that a matrix is irreducible if and only if it represents
a (weighted) connectivity matrix of a strongly connected graph
(directed graph is called strongly connected if there is a path in each
direction between each pair of its vertices).
Clearly, this is a minimal necessary condition for reaching global consensus.

\begin{theorem}\label{thm:react}
Let the matrix $(\psi_{ij})_{i,j\in [N]}$ of communication weights be
symmetric
and irreducible.
Let
\(  \label{cond:react}
   (1+\lambda)\tau < 1/2.
\)
Then all solutions of \eqref{eq:react} subject to the initial datum \eqref{IC}
converge to the consensus state $X(0)$, i.e.,
\[
   \lim_{t\to\infty} x_i(t) = X(0) \qquad\mbox{for all } i\in [N],
\]
with the mean $X(0)$ defined in \eqref{def:mean}.
\end{theorem}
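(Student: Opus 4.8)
The plan is to exploit the mean-conservation afforded by symmetry, reduce to a decay statement for the deviations from the mean, and then construct a Lyapunov--Krasovskii functional tailored to the neutral structure. Since $(\psi_{ij})$ is symmetric, summing \eqref{eq:react} over $i$ and using $\psi_{ij}=\psi_{ji}$ shows that the double sum $\sum_{i,j}\psi_{ij}(x_j-x_i)$ and its anticipatory counterpart both vanish, so $\tot{X}{t}\equiv 0$ and the mean is conserved at $X(0)$. By translation invariance I may assume $X(0)=0$, so that $\sum_i x_i(t)=0$ for all $t$, and it suffices to prove $x_i(t)\to 0$. Following the Introduction I introduce the neutral variable $y_i(t):=x_i(t)-\lambda\tau\sum_j\psi_{ij}\bigl(x_j(t-\tau)-x_i(t-\tau)\bigr)$, for which $\tot{y_i}{t}=\sum_j\psi_{ij}\bigl(x_j(t-\tau)-x_i(t-\tau)\bigr)$ has a derivative-free right-hand side; symmetry also gives $\sum_i y_i(t)=0$, and one checks directly that $y_i(t)=x_i(t)-\lambda\tau\,\dot y_i(t)$.

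The candidate functional is $\mathcal{L}(t)=\tfrac12\sum_i|y_i(t)|^2+\mathcal{K}(t)$, where $\mathcal{K}$ is a nonnegative Krasovskii history term built from weighted integrals of $\sum_i|\dot x_i(s)|^2$ (and possibly of the pairwise discords $\sum_{i,j}\psi_{ij}|x_i(s)-x_j(s)|^2$) over $s\in[t-\tau,t]$, whose purpose is to absorb the delayed contributions generated below. Differentiating the principal term gives $\sum_i x_i(t)\,\dot y_i(t)-\lambda\tau\sum_i|\dot y_i(t)|^2$; writing $x_i(t)=x_i(t-\tau)+\int_{t-\tau}^t\dot x_i(s)\,\d s$, the leading contribution is, by symmetry of $\psi$, the genuine dissipation $-\tfrac12\sum_{i,j}\psi_{ij}|x_i(t-\tau)-x_j(t-\tau)|^2$. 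The remaining pieces are the nonpositive anticipation term $-\lambda\tau\sum_i|\dot y_i(t)|^2$ and a cross term pairing $\int_{t-\tau}^t\dot x_i(s)\,\d s$ against $\sum_j\psi_{ij}(x_j(t-\tau)-x_i(t-\tau))$.

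The crux is to dominate this cross term by the available dissipation. Applying Young's inequality splits it into a multiple of $\sum_i|\dot y_i(t)|^2$, to be absorbed by the anticipation term, and a multiple of $\tau\int_{t-\tau}^t\sum_i|\dot x_i(s)|^2\,\d s$ (via Cauchy--Schwarz), to be absorbed by $\tot{\mathcal{K}}{t}$. The main obstacle is precisely the neutral character of the equation: controlling $\sum_i|\dot x_i(t)|^2$ cannot be done directly, since by \eqref{eq:react} one has $\dot x_i(t)=\dot y_i(t)+\lambda\tau\sum_j\psi_{ij}(\dot x_j(t-\tau)-\dot x_i(t-\tau))$, which feeds $\dot x$ back into itself at the delayed time. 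Closing this recursion requires that the neutral difference operator be a contraction, and it is here that the hypothesis $(1+\lambda)\tau<\tfrac12$ enters: it should guarantee both that the coefficients produced by Young's inequality can be chosen so that all delayed and derivative terms are strictly dominated, and that the $\dot x$-recursion is summable, yielding an a priori bound on $\sup_t\sum_i|\dot x_i(t)|^2$ in terms of the data. I expect the factor $1/2$ to arise from the Young splitting and the factor $(1+\lambda)$ to combine the plain delay (the displacement $x_i(t)-x_i(t-\tau)$) and the anticipatory (weight $\lambda$) contributions.

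Once the bookkeeping yields $\tot{\mathcal{L}}{t}\leq -c\sum_{i,j}\psi_{ij}|x_i(t-\tau)-x_j(t-\tau)|^2$ for some $c>0$, the proof concludes in the standard way: $\mathcal{L}$ is nonincreasing and bounded below, hence convergent, so the dissipation is integrable and $\sum_{i,j}\psi_{ij}|x_i(t)-x_j(t)|^2\to 0$. Positivity of the weights on the edges of the strongly connected (irreducible) interaction graph then forces every pairwise difference $x_i(t)-x_j(t)$ to vanish in the limit, i.e. $d_x(t)\to 0$. Finally, conservation of the mean identifies the common limit as $X(0)$, which is the assertion.
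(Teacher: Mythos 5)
Your skeleton coincides with the paper's proof: symmetry gives conservation of the mean, you reduce to $X\equiv 0$, you use the same neutral variable $y_i=x_i-\lambda\tau\bPhii$ (the paper's principal Lyapunov term, with $\bPhii$ as in \eqref{def:Phii}), the same symmetrization producing the dissipation $-\tfrac12 D(t-\tau)$, and a Lyapunov--Krasovskii completion. The genuine gap is that the step you yourself call ``the crux'' --- absorbing the cross term and the delayed-derivative feedback under the hypothesis $(1+\lambda)\tau<1/2$ --- is never carried out: it is deferred with ``it should guarantee'' and ``I expect the factor $1/2$ to arise''. This is not a routine verification, because it is exactly where the neutral structure bites: your history term in $\int_{t-\tau}^t\sum_i|\dot x_i(s)|^2\,\d s$ contributes $+\,C\sum_i|\dot x_i(t)|^2$ to the derivative, the equation converts this into delayed derivatives $\sum_i|\dot x_i(t-\tau)|^2$, and one must exhibit explicit coefficients for which all absorption inequalities hold simultaneously and trace back to precisely \eqref{cond:react}. (For the record, this route can be closed: using $\sum_i\bigl|\sum_j\psi_{ij}(\dot x_j(t-\tau)-\dot x_i(t-\tau))\bigr|^2\le 4\sum_i|\dot x_i(t-\tau)|^2$, a second history term whose negative boundary part absorbs the delayed derivatives, and Young parameter $\eta=2\lambda$, the constraints reduce to $(1+2\lambda)\tau\le 1$ together with $(1+\lambda)\tau<1/2$, the first being implied by the second. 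But none of this is in your text.) Moreover, your remark that the hypothesis should yield ``an a priori bound on $\sup_t\sum_i|\dot x_i(t)|^2$ in terms of the data'' conflates two roles: a data-dependent constant is useless inside the Lyapunov derivative (constants do not produce decay); what the bookkeeping needs is absorption by negative terms of the functional.

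The paper (Lemmas \ref{lem:sd1} and \ref{lem:Led}) avoids the recursion altogether, and this is the idea your proposal misses: substituting \eqref{eq:react} into $x_i-\wtx_i=\int_{t-\tau}^t\dot x_i(s)\,\d s$ makes the neutral part integrate exactly, $\int_{t-\tau}^t\lambda\tau\,\dot\Phi^i(s-\tau)\,\d s=\lambda\tau\bigl(\Phi^i(t-\tau)-\Phi^i(t-2\tau)\bigr)$, so the cross term is estimated purely by derivative-free quantities ($D$ at delayed times and $|\Phi^i|^2$, the latter controlled by $D$ via Jensen); the Krasovskii terms in \eqref{Lyap:sd} are then integrals of $|\Phi^i(s-\tau)|^2$ and a double integral of $D$, and the threshold $(1+\lambda)\tau<1/2$ falls out of the explicit optimization $\eps=2\lambda$, $\delta=(1+2\lambda)\tau$. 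Finally, your closing inference ``dissipation integrable $\Rightarrow$ dissipation $\to 0$'' is not automatic: it requires uniform continuity of $D$ (Barbalat), which would in turn need the very derivative bounds you never establish; the paper instead invokes the Lyapunov theorem for neutral functional differential equations \cite[Theorem 8.1 of Section 9]{Hale-book}, combined with the fact that, by irreducibility and $X=0$, $D(\mathbf{x})=0$ forces $\mathbf{x}=0$.
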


Condition \eqref{cond:react} of Theorem \ref{thm:react}
can be read as a smallness condition on $\lambda>0$ relative to a given
$\tau < 1/2$. As we shall observe numerically in Section \ref{subsec:N2react} for $N=2$,
there is a parameter regime where solutions with moderate anticipation, i.e., with $\lambda>0$,
are stable (converge to consensus), while solutions without anticipation, i.e., $\lambda=0$,
are unstable. Obviously, Theorem \ref{thm:react} does not capture this phenomenon.
The reason is that its proof is based on a construction of a Lyapunov functional,
which naturally leads to a sufficient condition 
that imposes smallness of $\lambda$ and $\tau$.
In fact, we are not aware of any analytical results that would
explain this stabilization-by-anticipation phenomenon.
On the other hand, too large values of $\lambda>0$ have the opposite effect,
i.e., they cause loss of stability of solutions that are stable without anticipation.
Therefore, the value of the statement of Theorem \ref{thm:react}
lies in guaranteeing preservation of stability.

\section{The simplified case - two agents}\label{sec:N2}
To gain insight into the type of dynamics produced by the systems \eqref{eq:prop} and \eqref{eq:react},
we consider their maximally simplified versions with two agents only, $N=2$.
Note that then \eqref{psi:conv} dictates $\psi_{12}=\psi_{21}=1$ for the transmission-type delay system \eqref{eq:prop},
and we shall use the same values also for the reaction-type delay system \eqref{eq:react}.

\subsection{Transmission delay with $N=2$}\label{subsec:N2prop}
The system \eqref{eq:prop} with $N=2$ and $\psi_{12}=\psi_{21}=1$ reads
\[
   \dot x_1 &=& \wtx_2 + \lambda\tau \dot\wtx_2 - x_1, \\
   \dot x_2 &=& \wtx_1 + \lambda\tau \dot\wtx_1 - x_2.
\]
Denoting $x:=x_1-x_2$, we have
\(    \label{eq:N2trans}
   \dot x = -\wtx - \lambda\tau \dot\wtx - x,
\)
equipped with the initial datum $x(t) = x^0(t)$ for $t\in [-\tau,0]$,
with $x^0 \in C^1([-\tau,0])$.
Clearly, consensus in the context of \eqref{eq:N2trans} corresponds to $x=0$.
It is known that the zero steady state of \eqref{eq:N2trans} is globally stable 
if and only if $|\lambda\tau|<1$, see \cite[Chapter 9.9]{Hale-book}.
In other words, for a given $\tau> 0$, anticipation
in \eqref{eq:N2trans} leads to consensus, for all initial data $x^0 \in C^1([-\tau,0])$,
if and only if $\lambda < \tau^{-1}$.
Note that without anticipation (i.e., $\lambda=0$), the solution
always converges to the consensus $x=0$, for any length of the delay $\tau\geq 0$.

We illustrate this fact by a numerical simulation. We discretize \eqref{eq:N2trans}
on an equidistant mesh with time step $\Delta t >0$, chosen such that
$\tau$ is an integer multiple of $\Delta t$. 
Implicit Euler discretization of $\dot x$ and $\dot\wtx$ leads then to a difference
equation that can be numerically integrated.
We show results of numerical integration with the constant initial datum $x^0\equiv 1$.
In Fig. \ref{fig:1} we consider the case $\tau:=0.25$, where the solution of \eqref{eq:N2trans} without anticipation (i.e., $\lambda:=0$)
decreases monotonically without oscillations. With $\lambda:=1$, which corresponds to Taylor expansion
of the respective term, the solutions still decays monotonically, but the decay is slower, as is evident from the inserted logarithmic plot.
Choosing $\lambda:=4$, which gives the borderline case $\lambda\tau=1$, oscillations appear
that convergence to a periodic solution.
Finally, after crossing the threshold $\lambda\tau=1$ by choosing $\lambda:=4.5$,
diverging oscillations appear.
In Fig. \ref{fig:2} we choose $\tau:=1.25$, where the solution of \eqref{eq:N2trans} without anticipation
develops oscillations (but still converges to zero as $t\to\infty$).
We first choose $\lambda:=0.2$ to demonstrate that small enough anticipation
may actually decrease the amplitude of the oscillations.
However, already the choice $\lambda:=0.6$ leads to an increase in the amplitude
compared with the no-anticipation case. Finally, the Taylor anticipation $\lambda:=1$
crosses the boundary of stability $\lambda\tau=1$ and produces a diverging solution.

\begin{figure} \centering
 \includegraphics[width=.32\textwidth]{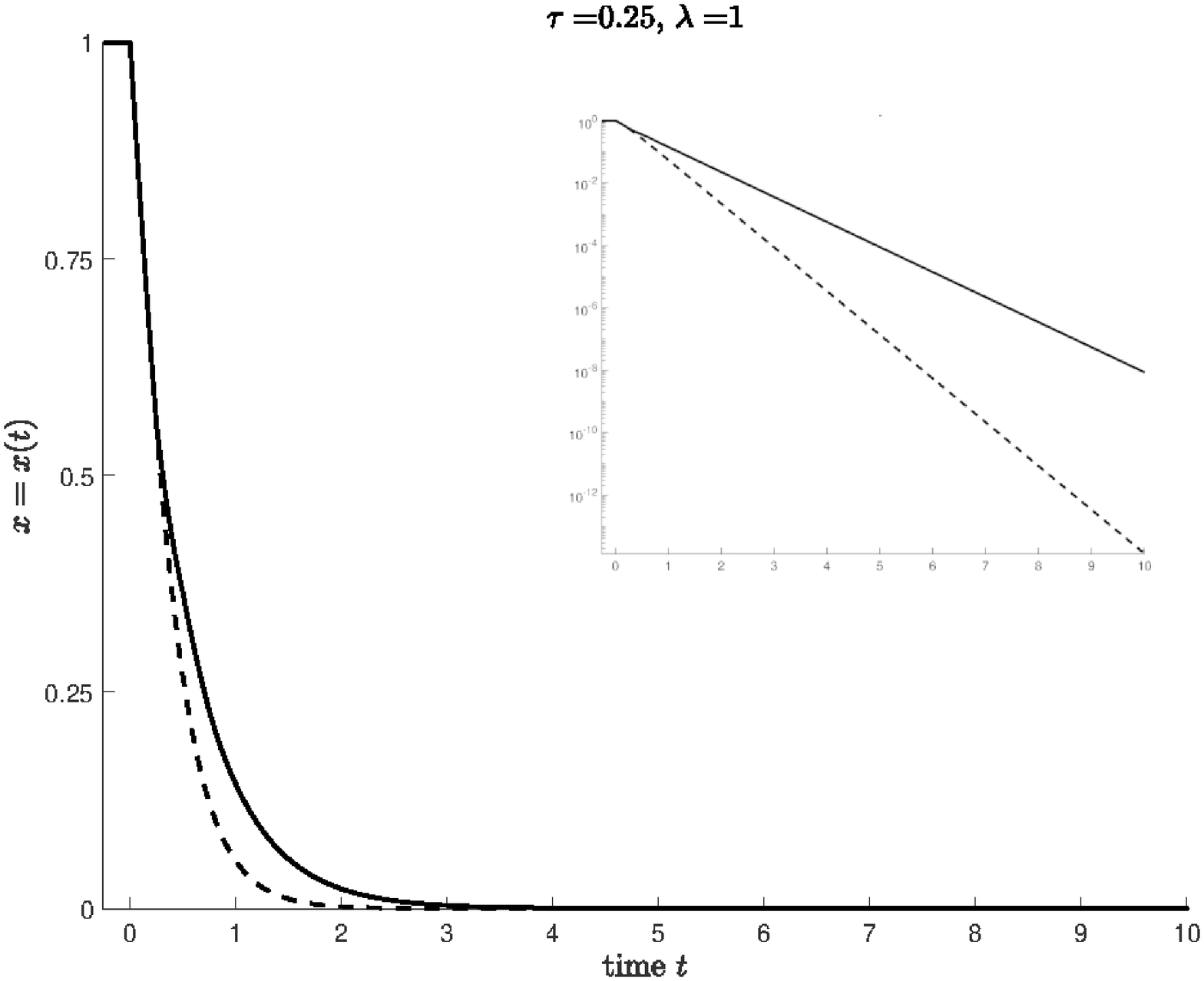}
 \includegraphics[width=.32\textwidth]{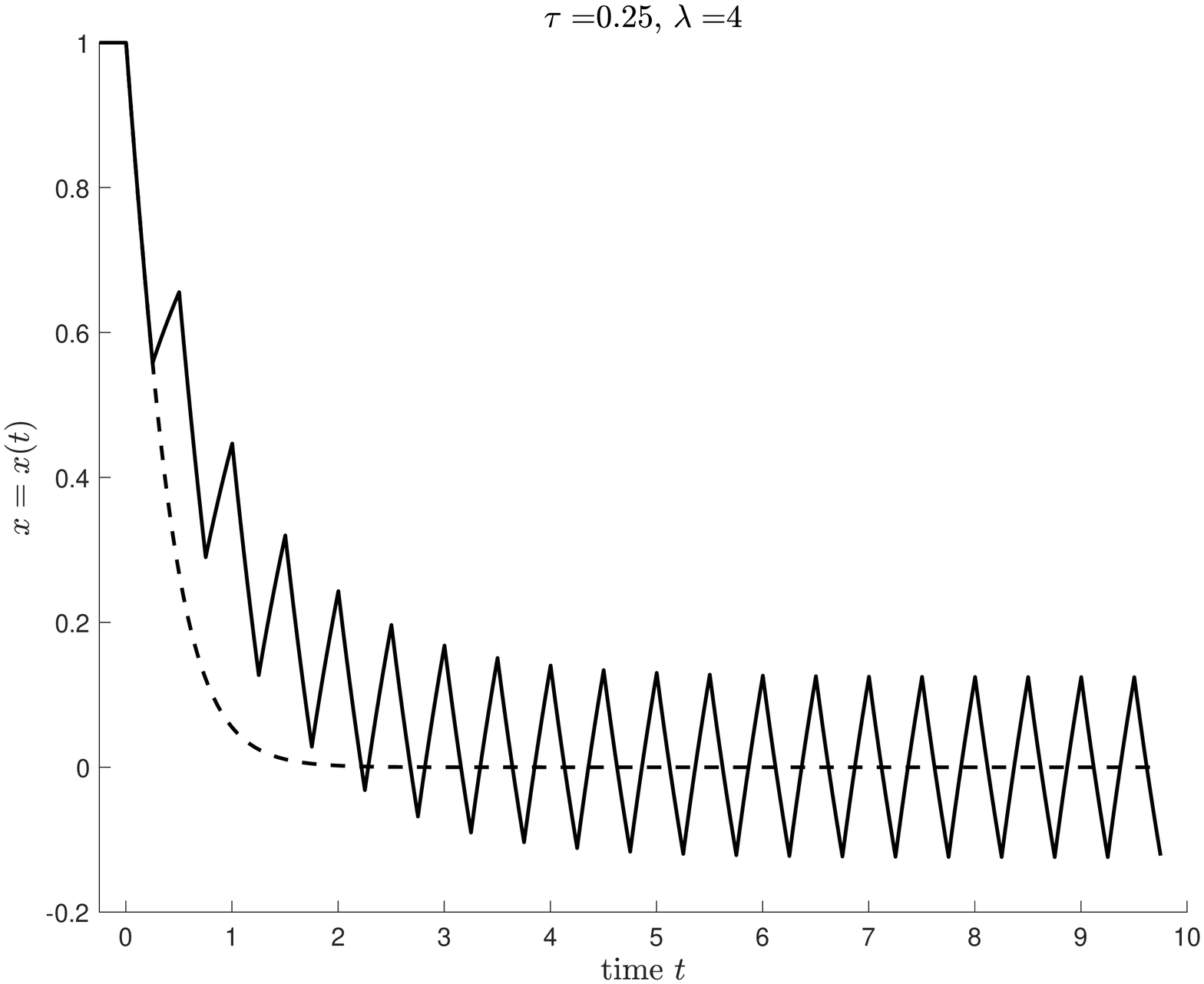}
 \includegraphics[width=.32\textwidth]{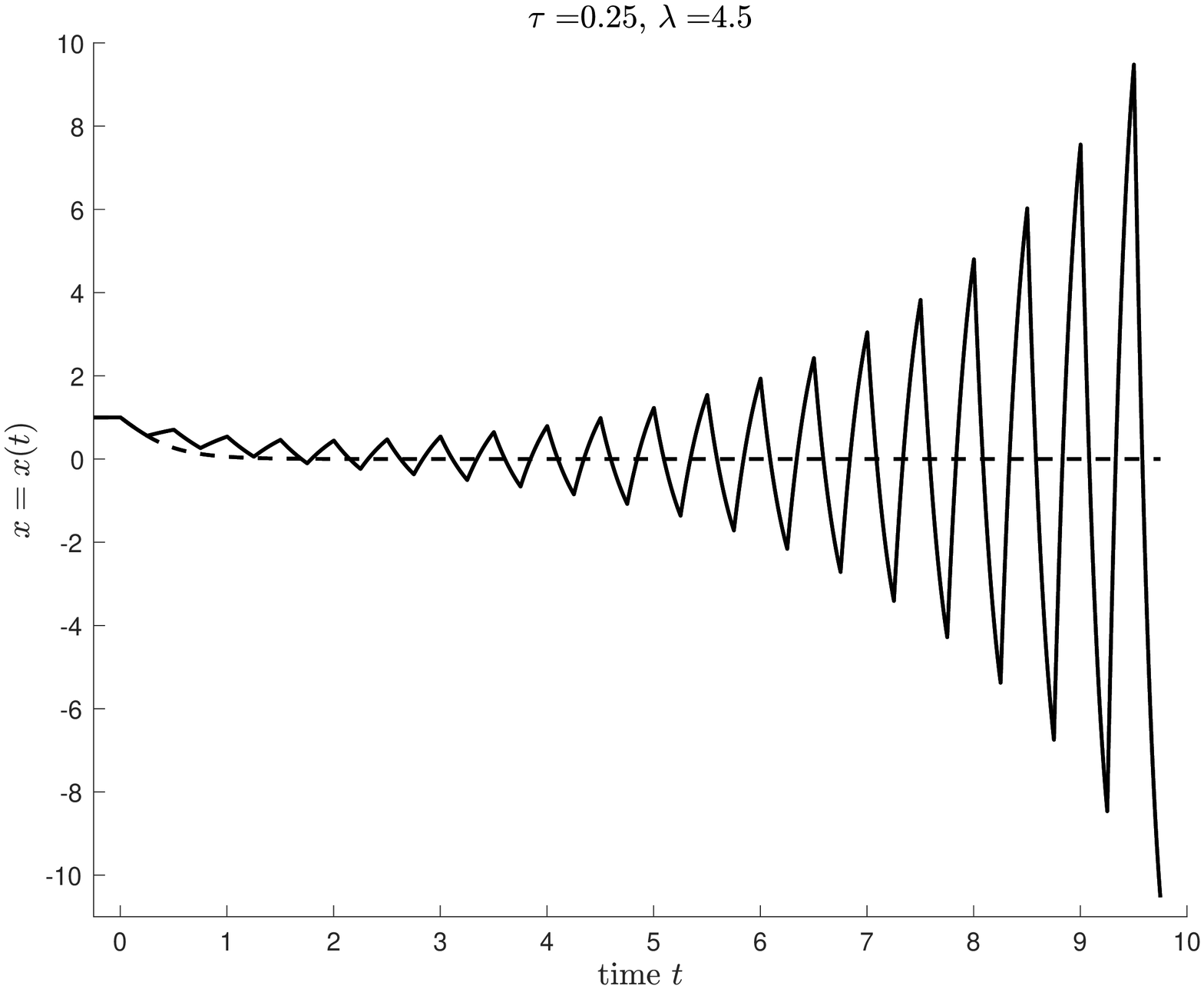}
 \caption{Numerical integration of \eqref{eq:N2trans} subject to the constant initial datum $x^0 \equiv 1$,
 with $\tau=0.25$ and $\lambda=1$ (solid line in left panel), $\lambda=4$ (solid line in middle panel)
 and $\lambda=4.5$ (solied line in right panel).
 For comparison, numerical solution of \eqref{eq:N2trans} with $\tau=0.25$ and $\lambda=0$ (no anticipation)
 is indicated by dashed line in all three panels.
 The inserted plot in the left panel is in logarithmic scale (vertical axis).
 \label{fig:1}}
\end{figure}

\begin{figure} \centering
 \includegraphics[width=.32\textwidth]{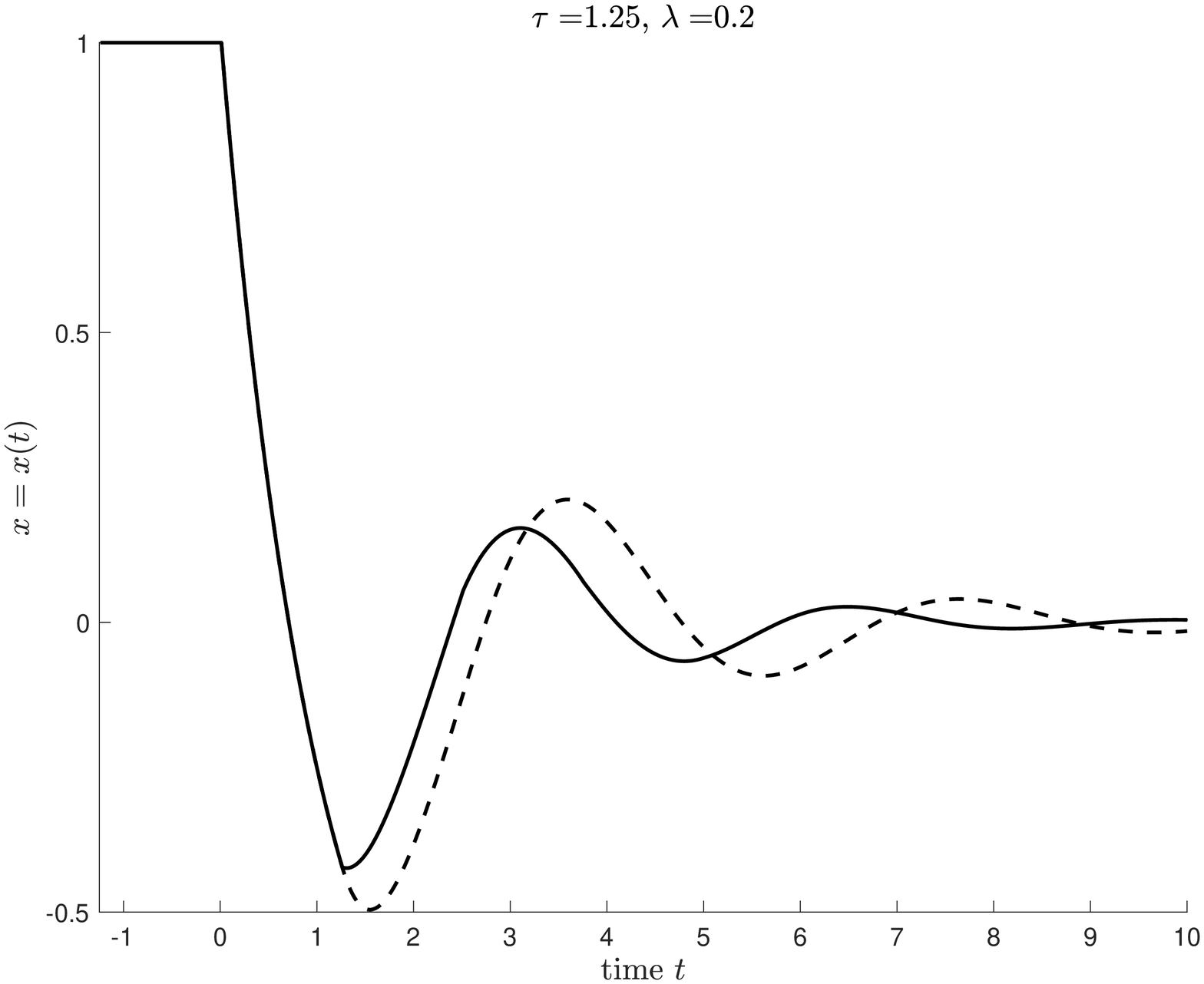}
 \includegraphics[width=.32\textwidth]{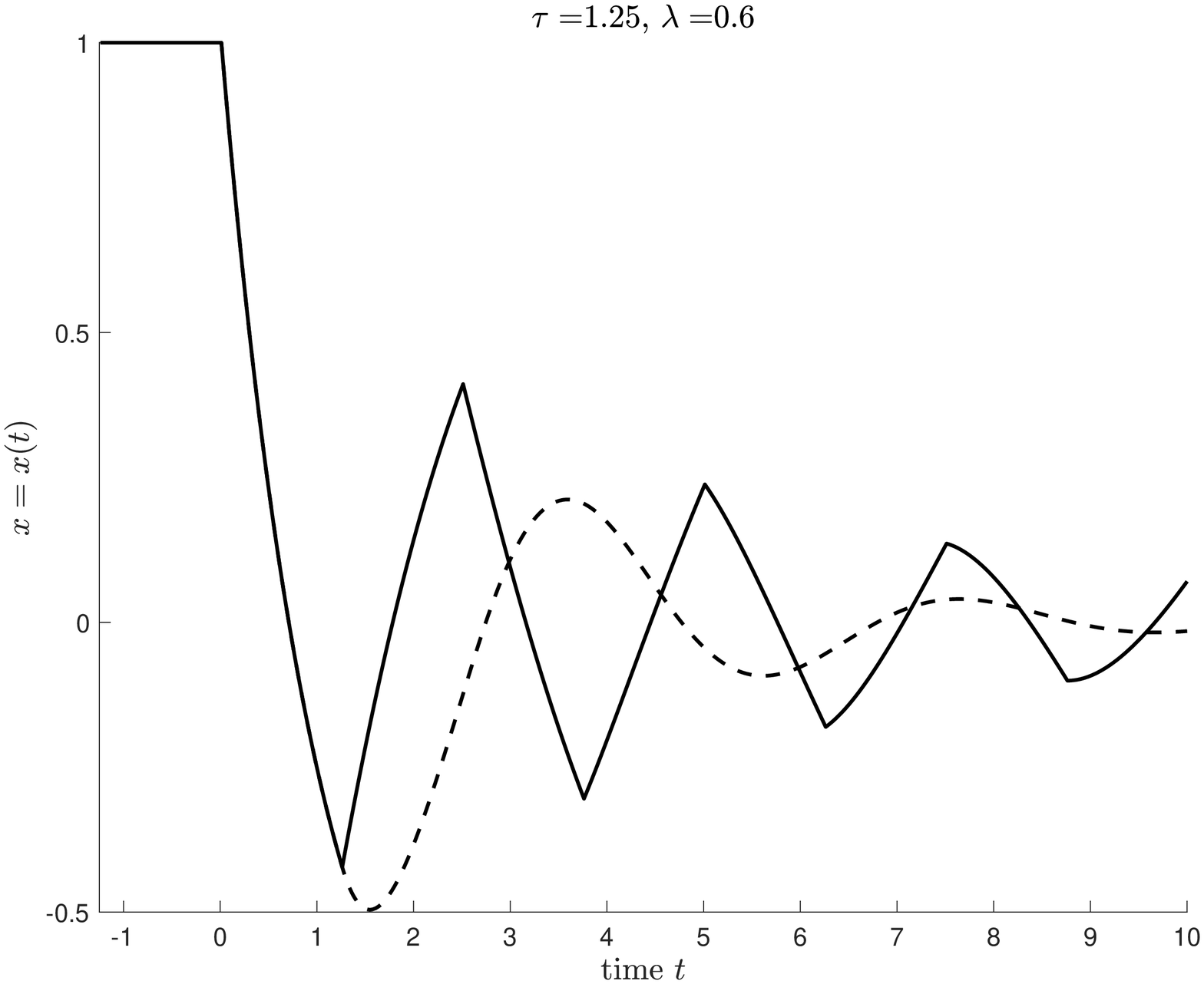}
 \includegraphics[width=.32\textwidth]{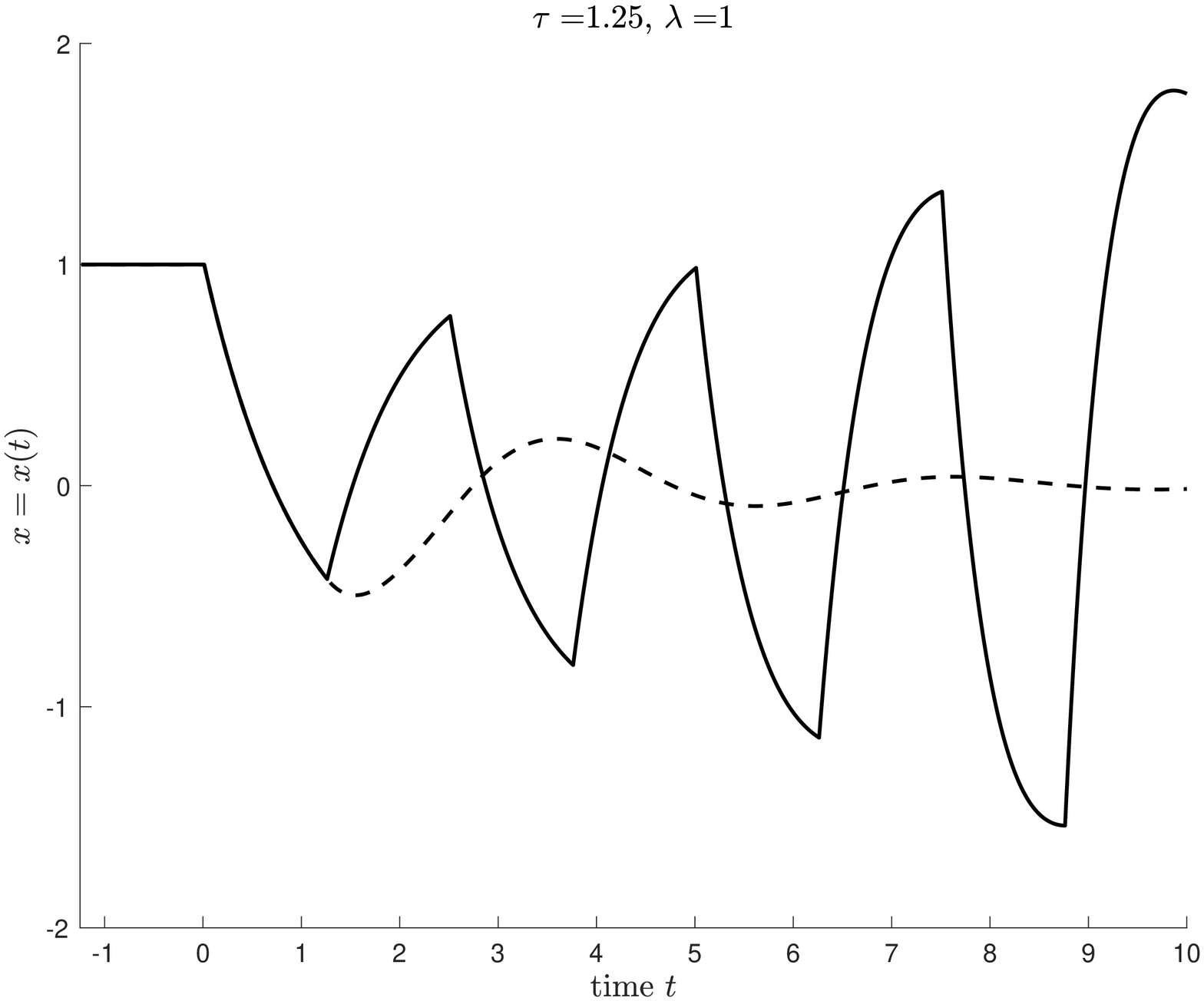}
 \caption{Numerical integration of \eqref{eq:N2trans} subject to the constant initial datum $x^0 \equiv 1$,
 with $\tau=1.25$ and $\lambda=0.2$ (solid line in left panel), $\lambda=0.6$ (solid line in middle panel)
 and $\lambda=1$ (solied line in right panel).
 For comparison, numerical solution of \eqref{eq:N2trans} with $\tau=1.25$ and $\lambda=0$ (no anticipation)
 is indicated by dashed line in all three panels.
 \label{fig:2}}
\end{figure}

\subsection{Reaction delay with $N=2$}\label{subsec:N2react}
The system \eqref{eq:react} with $N=2$ and $\psi_{12}=\psi_{21}=1$ reads
\[
   \dot x_1 &=& \wtx_2 + \lambda\tau \dot\wtx_2 - (\wtx_1 + \lambda\tau\dot \wtx_1), \\
   \dot x_2 &=& \wtx_1 + \lambda\tau \dot\wtx_1 - (\wtx_2 + \lambda\tau\dot \wtx_2).
\]
Denoting $x:=x_1-x_2$, we have
\(    \label{eq:N2react}
   \dot x = - 2 \wtx - 2\lambda\tau \dot\wtx,
\)
subject to the initial datum $x(t) = x^0(t)$ for $t\in [-\tau,0]$,
with $x^0 \in C^1([-\tau,0])$.
Although stability of equations of the type \eqref{eq:N2react} has been studied
in the literature, see, e.g., \cite{Kim}, no explicit analytical condition for stability
is known to us that would cover the case $\lambda\tau>0$ in \eqref{eq:N2react}.
If there is no anticipation, i.e., $\lambda:=0$, then \eqref{eq:N2react} reduces to
\(    \label{eq:N2noant}
   \dot x(t) = - 2 x(t-\tau),
\)
and an analysis of the corresponding characteristic equation
reveals that:
\begin{itemize}
\item
If $0 < 2\tau < e^{-1}$, then $x=0$ is asymptotically stable.
\item
If $e^{-1} < 2\tau < \pi/2$, then $x=0$ is asymptotically stable,
but every nontrivial solution of \eqref{eq:N2noant} is oscillatory
(i.e., changes sign infinitely many times as $t\to\infty$).
\item
If $2\tau > \pi/2$, then $x=0$ is unstable
and nontrivial solutions oscillate with unbounded amplitude
as $t\to\infty$.
\end{itemize}
We refer to Chapter 2 of \cite{Smith} and \cite{Gyori-Ladas} for details.

The analysis performed in Section \ref{sec:react} below, based on a construction of a Laypunov functional,
gives the sufficient condition
\(  \label{cond:suffN2}
   2(1+\lambda)\tau < 1
\)
for asymptotic stability of the trivial steady state. This is clearly not optimal, since we know that
for $\lambda=0$ the sufficient and necessary condition for asymptotic stability is $2\tau < \pi/2$.
Consequently, to get a more complete picture, we studied the asymptotic behavior of \eqref{eq:N2react} numerically.
Again, we discretized \eqref{eq:N2react} on an equidistant mesh with time step $\Delta t >0$, chosen such that
$\tau$ is an integer multiple of $\Delta t$. 
Implicit Euler discretization of $\dot x$ and $\dot\wtx$ leads then to a difference
equation that can be numerically integrated.
We chose the constant initial datum $x^0(t) \equiv 1$ for $t\in [-\tau,0]$
and performed systematic simulations with varying parameter values $\lambda\geq 0$ and $\tau\geq 0$,
detecting either decay or growth of the amplitude of the oscillations of the solution.
The result is presented in Fig. \ref{fig:3}, where the solid curve represents the threshold for stability
of the zero steady state obtained numerically -- the values of $(\lambda,\tau)$ below the solid curve
give $x(t)\to 0$ for large $t>0$, while the values above the curve produce diverging $x(t)$.
For comparison, the dashed curve represents the analytical sufficient condition \eqref{cond:suffN2}.
The critical value $\tau=\pi/4$ for the system without anticipation is indicated by the dotted line.

The most interesting set of parameter values in Fig. \ref{fig:3} is located in the area enclosed between
the solid curve and the dotted line $\tau=\pi/4$. Here the zero steady state is stable
with anticipation, but becomes unstable when anticipation is suppressed (i.e., $\lambda=0$).
This stabilizing effect of anticipation only works for values of $\tau$ below approx. $0.9$, with the range of admissible
values of $\lambda$ getting narrower as $\tau$ approaches this critical value.
To illustrate this stabilizing effect, we plotted numerical solutions of \eqref{eq:N2react}
for $\tau=0.85$ and $\lambda\in\{0.04, 0.25, 0.45\}$ represented by the solid line in Fig. \ref{fig:4},
while, for comparison, the dashed line shows the solution without anticipation (i.e., $\lambda=0$).
We see that for $\lambda=0.02$ the effect of anticipation significantly reduces the amplitude
of oscillations of the solution, but still is too weak to ultimately stabilize it.
The choice $\lambda=0.25$ gives stability of the zero steady state,
but increasing $\lambda$ to $0.45$ makes it unstable again.
Unfortunately, we are not able to provide analytical insights into
this phenomenon. In particular, 	established methods for asymptotic analysis
of functional differential equations (Lyapunov-type functionals, characteristic polynomials)
deliver sufficient/necessary conditions for stability of the form of smallness
assumptions on the parameters; see \eqref{cond:react}.
However, in Figs. \ref{fig:3} and \ref{fig:4}
we observe stabilization for moderate levels of anticipation, where the value of $\lambda$
is neither too small nor too large. We therefore hypothesize that this effect is not
explainable by established analytical methods and new approaches need to be
developed. We leave it as subject of future work.

\begin{figure} \centering
 \includegraphics[width=.5\textwidth]{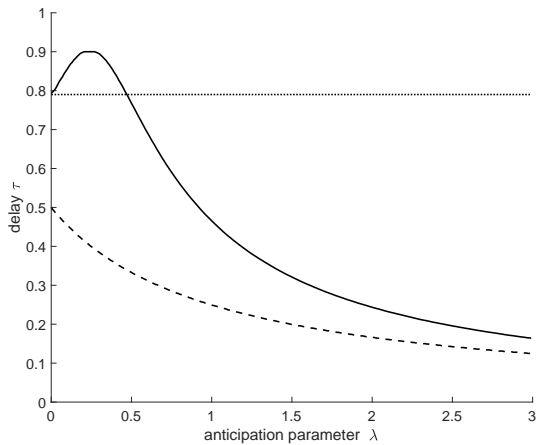}
 \caption{
Systematic simulations of \eqref{eq:N2react} subject to the constant initial datum $x^0(t) \equiv 1$ for $t\in [-\tau,0]$,
for the parameter range $\lambda\in [0,3]$ and $\tau\in [0,1]$.
The solid curve represents the threshold for stability
of the zero steady state obtained numerically -- the values of $(\lambda,\tau)$ below the solid curve
give $x(t)\to 0$ for large $t>0$, while the values above the curve produce diverging $x(t)$.
The dashed curve represents the analytical sufficient condition \eqref{cond:suffN2}.
The critical value $\tau=\pi/4$ for the system without anticipation is indicated by the dotted line.
 \label{fig:3}}
\end{figure}

\begin{figure} \centering
 \includegraphics[width=.32\textwidth]{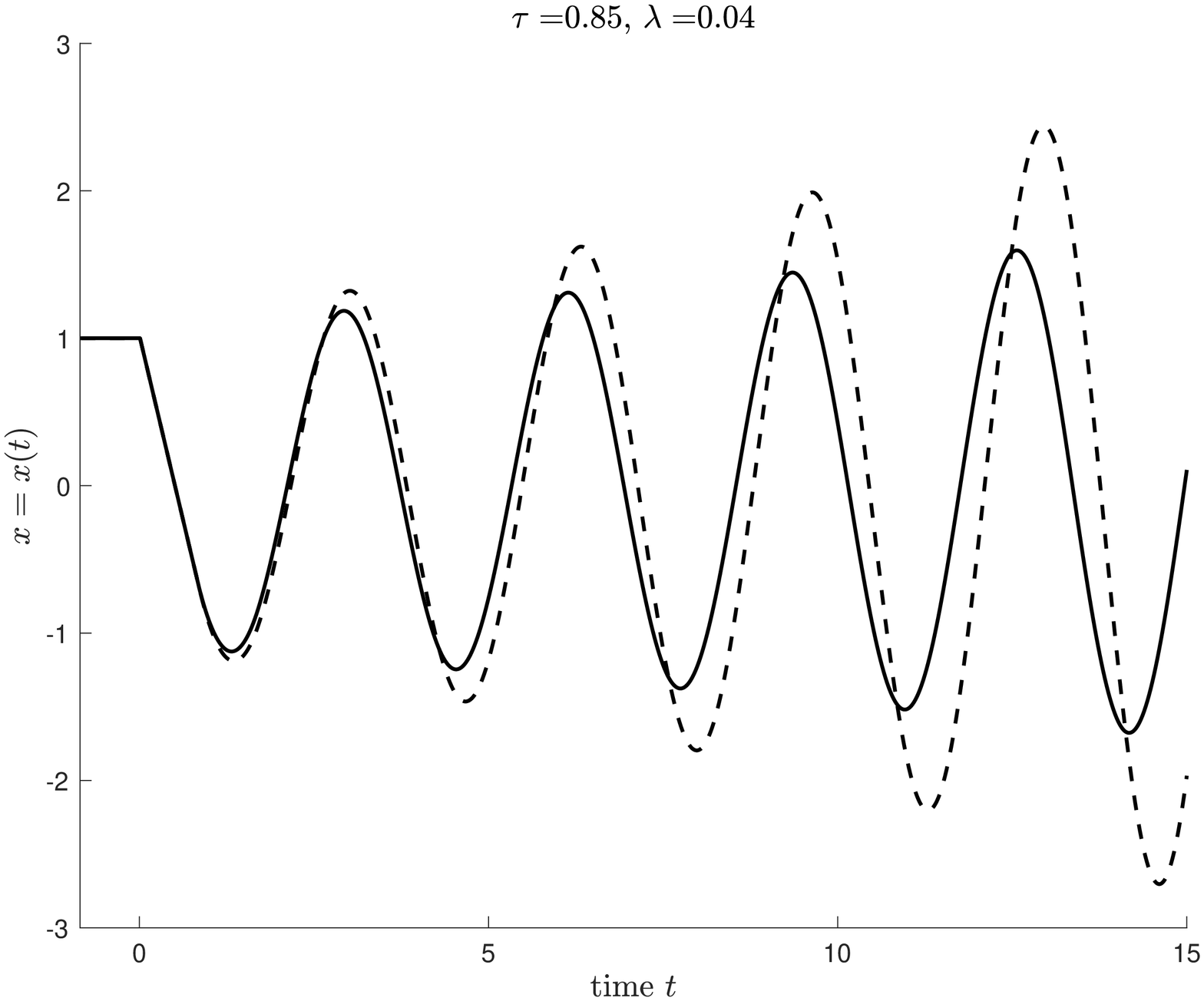}
 \includegraphics[width=.32\textwidth]{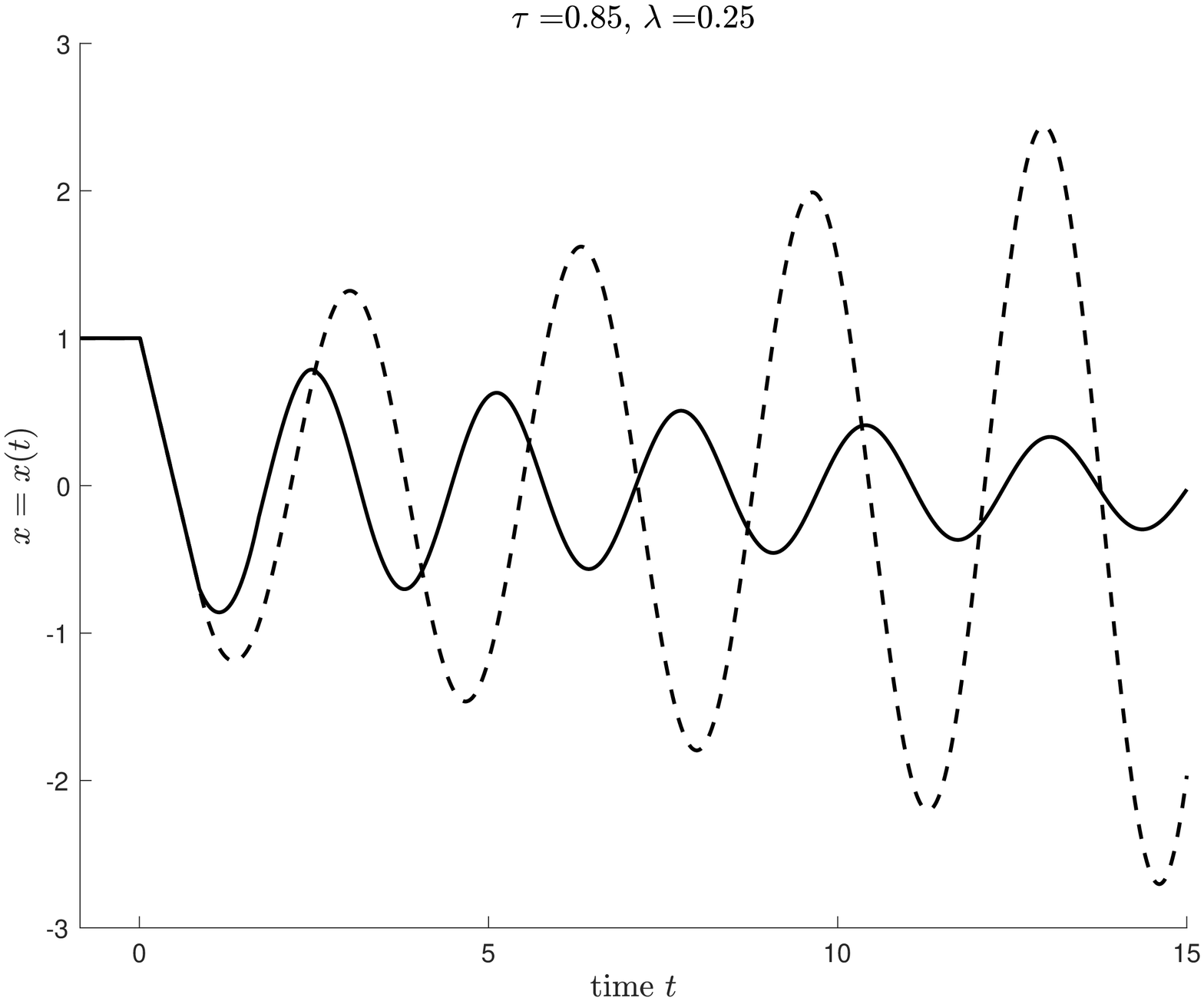}
 \includegraphics[width=.32\textwidth]{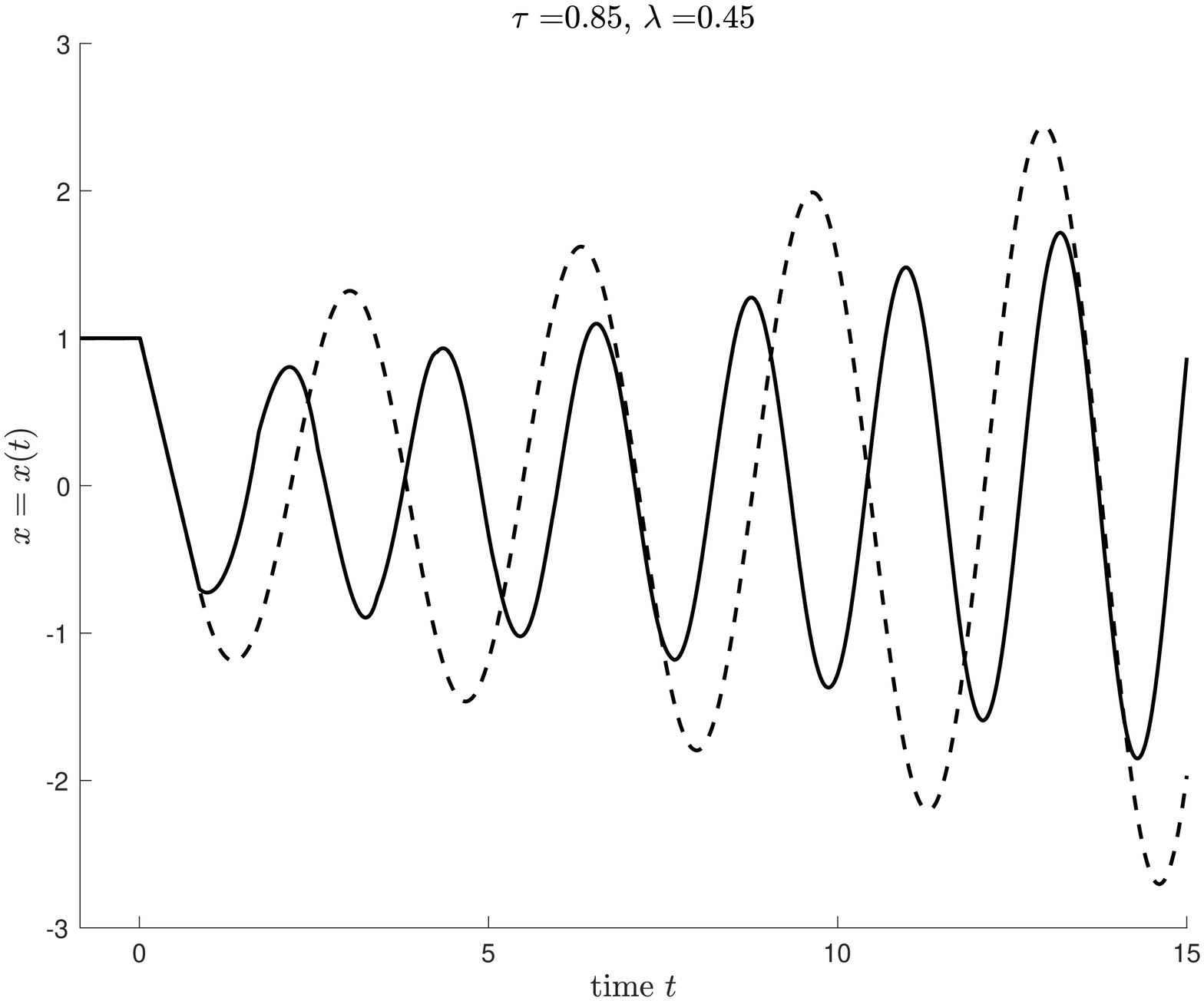}
 \caption{Numerical integration of \eqref{eq:N2react} subject to the constant initial datum $x^0 \equiv 0$,
 with $\tau=0.85$ and $\lambda=0.04$ (solid line in left panel), $\lambda=0.25$ (solid line in middle panel)
 and $\lambda=0.45$ (solied line in right panel).
 For comparison, numerical solution of \eqref{eq:N2react} with $\tau=0.85$ and $\lambda=0$ (no anticipation)
 is indicated by dashed line in all three panels.
 \label{fig:4}}
\end{figure}

\section{Asymptotic consensus with transmission delay - proof of Theorem \ref{thm:prop}}\label{sec:prop}
In this Section we provide a proof of Theorem \ref{thm:prop}.
It is based on the following geometrical lemma, presented in \cite{Has-SIADS},
based on the notion of coefficient of ergodicity introduced by Dobrushin \cite{Dobrushin}
in the context of Markov chains.

\begin{lemma}\label{lem:geom}
Let $N\geq 2$ and $\{x_1, \dots, x_N\}\subset \R^d$ be any set of vectors in $\R^d$.
Fix $i, k \in [N]$ such that $i\neq k$ and let
$\eta^i_j \geq 0$ for all $j\in[N] \setminus\{i\}$, 
and $\eta^k_j \geq 0$ for all $j\in[N] \setminus\{k\}$, 
such that
\[ 
   \sum_{j\neq i} \eta^i_j = 1,\qquad \sum_{j\neq k} \eta^k_j = 1.
\]
Denote
\[ 
   \mu := \min \left\{ \min_{j\neq i} \eta^i_j,\; \min_{j\neq k} \eta^k_j \right\} \geq 0.
\]
Then
\[   
   \left| \sum_{j\neq i} \eta^i_j x_j - \sum_{j\neq k} \eta^k_j x_j \right| \leq (1-(N-2)\mu) \max_{i,j \in [N]}|x_i - x_j|.
\]
\end{lemma}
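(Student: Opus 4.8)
The plan is to recast the statement as a contraction estimate for the difference of two probability distributions, in the spirit of Dobrushin's coefficient of ergodicity. First I would introduce two probability vectors on the index set $[N]$: set $p_j := \eta^i_j$ for $j \neq i$ and $p_i := 0$, and $q_j := \eta^k_j$ for $j \neq k$ and $q_k := 0$. By the normalization hypotheses both $p$ and $q$ sum to $1$, and the two sums appearing in the lemma become $\sum_{j\neq i}\eta^i_j x_j = \sum_{j=1}^N p_j x_j$ and $\sum_{j\neq k}\eta^k_j x_j = \sum_{j=1}^N q_j x_j$. Hence the quantity to be estimated is $\bigl| \sum_{j=1}^N (p_j - q_j) x_j \bigr|$, and crucially the coefficients $c_j := p_j - q_j$ sum to zero.

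Next I would exploit the sum-zero property to convert the linear combination into a difference of two points in the convex hull of $\{x_1,\dots,x_N\}$. Writing $c_j = c_j^+ - c_j^-$ with $c_j^\pm := \max(\pm c_j, 0) \geq 0$, the identity $\sum_j c_j = 0$ gives $\sum_j c_j^+ = \sum_j c_j^- =: P$. Then $\sum_j c_j x_j = P\bigl( P^{-1}\sum_j c_j^+ x_j - P^{-1}\sum_j c_j^- x_j\bigr)$ is $P$ times the difference of two convex combinations of the $x_j$ (assuming $P>0$; the case $P=0$ is trivial, as then $p=q$). Since the difference of any two points of the convex hull is bounded in norm by the diameter $\max_{a,b\in[N]}|x_a - x_b|$, this yields $\bigl|\sum_j c_j x_j\bigr| \leq P\,\max_{a,b\in[N]}|x_a-x_b|$, where $P = \tfrac12\sum_j |p_j - q_j|$ is exactly the total variation distance between $p$ and $q$.

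It then remains to show $P \leq 1 - (N-2)\mu$, which I would do via the elementary total-variation identity $\tfrac12\sum_j|p_j - q_j| = 1 - \sum_j \min(p_j,q_j)$. The terms $j=i$ and $j=k$ contribute nothing to $\sum_j \min(p_j,q_j)$, since $p_i = 0$ and $q_k = 0$; for each of the remaining $N-2$ indices $j \notin \{i,k\}$ one has $\min(p_j,q_j) = \min(\eta^i_j,\eta^k_j) \geq \mu$ by the very definition of $\mu$. Summing gives $\sum_j \min(p_j,q_j) \geq (N-2)\mu$, hence $P \leq 1 - (N-2)\mu$, and combining with the previous paragraph completes the proof. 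The main conceptual step, and the only nonroutine point, is recognizing the contraction bound $\bigl|\sum_j c_j x_j\bigr| \leq (\text{total variation})\times(\text{diameter})$ valid for sum-zero coefficients; once this is in place, the bookkeeping of the $N-2$ off-diagonal indices, together with the vanishing of the diagonal contributions at $j=i$ and $j=k$, is all that remains to check.
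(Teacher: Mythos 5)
Your proof is correct and complete, and it takes essentially the approach the paper itself points to: the paper does not prove Lemma \ref{lem:geom} inline but defers to \cite[Lemma 3.2]{Has-SIADS}, explicitly attributing the idea to Dobrushin's coefficient of ergodicity, which is precisely your argument --- bound the difference by the total variation distance times the diameter (via the positive/negative part decomposition of the sum-zero coefficients), then estimate the overlap $\sum_j \min(p_j,q_j) \geq (N-2)\mu$ using that the diagonal entries $p_i$ and $q_k$ vanish. No gaps to report.
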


We refer to \cite[Lemma 3.2]{Has-SIADS} for the proof. Note that, clearly,
Lemma \ref{lem:geom} only provides a nontrivial bound if $N\geq 3$;
however, the case $N=2$ was already discussed in Section \ref{subsec:N2prop}.

Let us recall that due to the positivity assumption \eqref{psi:pos},
all off-diagonal elements $\psi_{ij}$, $i\neq j$, are strictly positive.
Therefore, introducing the notation
\[  
    \upsi := \min_{i\neq j} \psi_{ij},
\]
we have $\upsi >0$. We further denote
\(   \label{def:gamma}
   \gamma:=1-(N-2)\upsi.
\)
Due to the stochasticity assumption \eqref{psi:conv} we have
\[
   1 = \sum_{j\neq i} \psi_{ij} \geq (N-1)\upsi,
\]
so that
\[
   \gamma \geq 1-\frac{N-2}{N-1} > 0.
\]
Let us also recall the assumption that there exists some $T_0>0$
and fixed indices $I,J \in [N]$ such that \eqref{IJ} holds for $t\geq T_0$.
We then define the functional
\(  \label{def:L}
   \mathcal{L}(t) := \frac12 \left[ \left( x_{I} - \lambda\tau \bPsi^{I}\right) - \left(x_{K} - \lambda\tau \bPsi^{K} \right) \right]^2
       + \left[ \frac{1+\lambda\tau}{2\gamma} - \lambda\tau \right] \gamma^2  \int_{t-\tau}^t d_x(s)^2 \d s,
\)
where we introduced the short-hand notation
\(  \label{def:Psii}
   \Psi^i(t) := \sum_{j\neq i} \psi_{ij} x_j(t),
\)
for $i\in [N]$. Note that we again apply the convention $\bPsii = \Psi^i(t-\tau)$.
Observe that since $\gamma\in (0,1]$, the term $\left[ \frac{1+\lambda\tau}{2\gamma} - \lambda\tau \right]$
is nonnegative whenever $\lambda\tau\leq 1$, and then $\mathcal{L}\geq 0$ by definition.
We now derive a decay estimate on $\mathcal{L}=\mathcal{L}(t)$.

\begin{lemma} \label{lem:diss}
Let the matrix of communication weights $(\psi_{ij})_{i,j\in [N]}$
verify the stochasticity \eqref{psi:conv} and positivity \eqref{psi:pos}
assumptions. Let $\lambda\tau\leq 1$ and $\gamma\in (0,1]$
be given by \eqref{def:gamma}.
Then for $\mathcal{L}=\mathcal{L}(t)$ given by \eqref{def:L} we have along solutions of \eqref{eq:prop},
for $t> T_0$,
\(   \label{L:diss}
   \tot{}{t} \mathcal{L}(t) \leq - (1-\gamma) (1-\lambda\tau\gamma) \, d_x(t)^2.
\)
\end{lemma}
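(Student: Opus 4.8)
The plan is to differentiate $\mathcal{L}$ along \eqref{eq:prop} and reduce \eqref{L:diss} to an elementary scalar quadratic inequality. Let $I,J$ be the indices realizing the maximal discord in \eqref{IJ} (this is the pair appearing in \eqref{def:L}), and abbreviate $y_i := x_i - \lambda\tau\bPsi^i$, so that the first bracket in \eqref{def:L} equals $y_I - y_J$. The decisive simplification, already flagged in the Introduction, is that the neutral (velocity) terms cancel in $\dot y_i$: differentiating and substituting \eqref{eq:prop}, then using row-stochasticity \eqref{psi:conv}, gives
\[
   \dot y_i &=& \sum_{j\neq i}\psi_{ij}\bigl(x_j(t-\tau) - x_i(t)\bigr) \;=\; \bPsi^i - x_i(t).
\]
Setting $a := x_I(t) - x_J(t)$ and $b := \bPsi^I - \bPsi^J$, we thus have $y_I - y_J = a - \lambda\tau b$ and $\dot y_I - \dot y_J = b - a$.

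Next I would differentiate the quadratic part, obtaining $\tot{}{t}\tfrac12|y_I-y_J|^2 = (a-\lambda\tau b)\cdot(b-a) = -|a|^2 + (1+\lambda\tau)\,a\cdot b - \lambda\tau|b|^2$, while the derivative of the integral term contributes $C\,\bigl(d_x(t)^2 - d_x(t-\tau)^2\bigr)$ with $C := [\tfrac{1+\lambda\tau}{2\gamma}-\lambda\tau]\gamma^2$, which is nonnegative thanks to $\lambda\tau\leq 1$ and $\gamma\in(0,1]$. Three estimates then enter: assumption \eqref{IJ} gives the \emph{exact} equality $|a| = d_x(t)$ for $t\geq T_0$; Lemma \ref{lem:geom}, applied with $\eta^I_j=\psi_{Ij}$, $\eta^J_j=\psi_{Jj}$ and $\mu\geq\upsi$, gives $|b|\leq\gamma\,d_x(t-\tau)$; and Cauchy--Schwarz gives $a\cdot b\leq|a|\,|b|$.

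Writing $p := d_x(t) = |a|$ and $q := |b|$, these combine to
\[
   \tot{}{t}\mathcal{L} &\leq& -(1-C)\,p^2 + (1+\lambda\tau)\,pq - \lambda\tau\,q^2 - C\,d_x(t-\tau)^2,
\]
where the exact equality $|a|=d_x(t)$ is precisely what lets $-|a|^2$ merge with the $+C\,d_x(t)^2$ from the integral into $-(1-C)p^2$. Replacing $-C\,d_x(t-\tau)^2\leq -C\,q^2/\gamma^2$ via $q\leq\gamma\,d_x(t-\tau)$, the coefficient $C$ is tuned so that $\lambda\tau\,q^2 + C\,q^2/\gamma^2 = \tfrac{1+\lambda\tau}{2\gamma}\,q^2$ and the stray $-\lambda\tau q^2$ cancels. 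The right-hand side becomes the scalar quadratic $-(1-C)p^2 + (1+\lambda\tau)pq - \tfrac{1+\lambda\tau}{2\gamma}q^2$; completing the square in $q$ rewrites it as $-\tfrac{1+\lambda\tau}{2\gamma}(q-\gamma p)^2 + \bigl[(1+\lambda\tau)\gamma - 1 - \lambda\tau\gamma^2\bigr]p^2$. Dropping the nonpositive square and factoring $(1+\lambda\tau)\gamma - 1 - \lambda\tau\gamma^2 = -(1-\gamma)(1-\lambda\tau\gamma)$ yields exactly \eqref{L:diss}.

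The hard part is not any single computation but the bookkeeping that makes the two $q^2$-contributions collapse: one must check that the coefficient $C$ in \eqref{def:L} is the choice for which the $-\lambda\tau|b|^2$ term is absorbed and the completed square is centered at $q=\gamma p$ with the same geometric-lemma constant $\gamma$, so that the residual $p^2$-coefficient simplifies to the clean product $-(1-\gamma)(1-\lambda\tau\gamma)$. A secondary but essential point is the role of \eqref{IJ}: without the exact identity $|a|=d_x(t)$ one could only bound $-|a|^2 \le 0$, and the cancellation against $+C\,d_x(t)^2$ would fail; the hypotheses $\lambda\tau\leq 1$ and $\gamma\in(0,1]$ are what keep $C\geq 0$ and orient all the inequalities toward an upper bound.
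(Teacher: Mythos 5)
Your proof is correct and follows essentially the same route as the paper's: the same cancellation of the neutral terms giving $\dot y_i = \bPsi^i - x_i$, the same use of Lemma \ref{lem:geom} together with the exact identity $|x_I - x_J| = d_x(t)$ from \eqref{IJ}, and the same final factorization $(1+\lambda\tau)\gamma - 1 - \lambda\tau\gamma^2 = -(1-\gamma)(1-\lambda\tau\gamma)$. The only cosmetic difference is that the paper introduces a free Young parameter $\delta$ in the cross term and then optimizes to $\delta = \gamma$ (checking the sign constraint \eqref{ass:delta}), whereas you verify the already-optimized coefficient of \eqref{def:L} directly by completing the square in $q = |\bPsi^I - \bPsi^J|$ --- an equivalent computation, including the sign condition $C \geq 0$, which is exactly \eqref{ass:delta} at $\delta = \gamma$.
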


\begin{proof}
We have for all $t>T_0$,
\[
    \frac12 \tot{}{t} \left[ \left( x_I - \lambda\tau \bPsi^{I}\right) - \left(x_K - \lambda\tau \bPsi^{K} \right) \right]^2  &=&
    - \Bigl( x_I - x_K - \lambda\tau \left( \bPsi^{I} - \bPsi^{K} \right) \Bigr) \cdot
      \Bigl( x_I - x_K - \left( \bPsi^{I} - \bPsi^{K} \right) \Bigr)  \\
      &=&
      - |x_I - x_K|^2 - \lambda\tau \left| \bPsi^{I} - \bPsi^{K} \right|^2 + (1+\lambda\tau) (x_I-x_K)\cdot \left( \bPsi^{I} - \bPsi^{K} \right).
\]
We apply the Cauchy-Schwarz inequality with $\delta>0$ for the last term,
\[
   (1+\lambda\tau) (x_I-x_K)\cdot \left( \bPsi^{I} - \bPsi^{K} \right) \leq
      \frac{(1+\lambda\tau) \delta}{2} |x_I - x_K|^2 + \frac{1+\lambda\tau}{2\delta} \left| \bPsi^{I} - \bPsi^{K} \right|^2,
\]
which gives, using \eqref{IJ},
\[
    \frac12 \tot{}{t} \left[ \left( x_I - \lambda\tau \bPsi^{I}\right) - \left(x_K - \lambda\tau \bPsi^{K} \right) \right]^2 \leq
      \left[ \frac{(1+\lambda\tau) \delta}{2} - 1\right] d_x(t)^2
       +  \left[ \frac{1+\lambda\tau}{2\delta} - \lambda\tau \right] \left| \bPsi^{I} - \bPsi^{K} \right|^2.
\]
With Lemma \ref{lem:geom}, recalling \eqref{def:gamma}, we have
\(   \label{geom_arg}
   \left| \bPsi^{I} - \bPsi^{K} \right| = \left| \sum_{j\neq i} \psi_{ij}\wtx_j - \sum_{j\neq k} \psi_{kj}\wtx_j \right|
     \leq (1 - (N-2) \upsi) \max_{i,j \in [N]}|\wtx_I - \wtx_j|
     = \gamma \, d_x(t-\tau).
\)
Consequently, making the assumption
\( \label{ass:delta}
   \delta \leq \frac{1+\lambda\tau}{2\lambda\tau},
\)
so that $\left[ \frac{1+\lambda\tau}{2\delta} - \lambda\tau \right] \geq 0$, we have
\[
   \frac12 \tot{}{t} \left[ \left( x_I - \lambda\tau \bPsi^{I}\right) - \left(x_K - \lambda\tau \bPsi^{K} \right) \right]^2 &\leq&
      \left[ \frac{(1+\lambda\tau) \delta}{2} - 1\right] d_x(t)^2 \\
     &&\qquad  +  \left[ \frac{1+\lambda\tau}{2\delta} - \lambda\tau \right] \gamma^2 \, d_x(t-\tau)^2.
\]
Therefore, defining
\[
   \mathcal{L}_\delta(t) := \frac12 \left[ \left( x_I - \lambda\tau \bPsi^{I}\right) - \left(x_K - \lambda\tau \bPsi^{K} \right) \right]^2
       + \left[ \frac{1+\lambda\tau}{2\delta} - \lambda\tau \right] \gamma^2  \int_{t-\tau}^t d_x(s)^2 \d s,
\]
we have
\[
   \tot{}{t} \mathcal{L}_\delta(t) \leq \left(
          \left[ \frac{(1+\lambda\tau) \delta}{2} -1 \right] + \left[ \frac{1+\lambda\tau}{2\delta} - \lambda\tau \right] \gamma^2  \right) d_x(t)^2.
\]
Minimization of the right-hand side in $\delta>0$ motivates the choice $\delta:=\gamma$, 
which identifies $\mathcal{L}_\delta$ with $\mathcal{L}$ defined in \eqref{def:L}, and
\[
   \tot{}{t} \mathcal{L}(t) \leq  
      - (1-\gamma) (1-\lambda\tau\gamma) \, d_x(t)^2.
\]
We finally note that with $\delta:=\gamma$ and $\lambda\tau\leq 1$, assumption \eqref{ass:delta}
is indeed verified since its right-hand side is larger or equal $1$ while $\delta=\gamma \leq 1$.
\end{proof}

Obviously, for $\lambda\tau\leq 1$ and $\gamma\in (0,1)$, Lemma \ref{lem:diss}
states that $\mathcal{L}$ is a monotonically decaying functional,
and the decay is strict unless $d_x\equiv 0$. However, this per se does not
imply that $d_x(t)\to 0$ as $t\to\infty$. Moreover, we cannot apply the
Lyapunov theorem here (strictly speaking, its version adapted
to neutral functional differential equations, see \cite[Theorem 8.1 of Section 9]{Hale-book}),
since system \eqref{eq:prop} has a continuum of steady states characterized by
$x_i \equiv x_j$ for all $i,j\in [N]$, and the asymptotic one is not determined
by the initial datum due to the lack of conserved quantities.
Consequently, to conclude the convergence to consensus, we shall
employ Barbalat's lemma \cite{Barbalat}. For its application we need
uniform continuity of $d_x=d_x(t)$, which is a direct consequence
on the following uniform a-priori estimate on the derivative of the trajectories
$x_i=x_i(t)$.


\begin{lemma}
Let $\lambda\tau\leq 1$ and denote
\(  \label{ass:M}
    M:= \max_{t\in [-\tau,0]} \max_{i\in [N]} \left\{ |x_i^0(t)|, |\dot x_i^0(t)| \right\}. 
\)
Then for all $t>0$ and $i\in [N]$,
\(   \label{claim:M}
     |x_i(t)|  \leq (1+\lambda\tau) M, \qquad 
     \left| \dot x_i(t) \right|  \leq 
        3 (1+\lambda\tau) M. 
\)
\end{lemma}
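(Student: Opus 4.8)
My plan is to establish both estimates in \eqref{claim:M} by a method-of-steps induction organized around the \emph{transmitted state} $z_i(t):=x_i(t)+\lambda\tau\dot x_i(t)$ (precisely the quantity $\xi_i$ that is communicated, here evaluated at the current time) rather than around $x_i$ directly. The point of working with $z_i$ is that it lets the neutral term $\lambda\tau\dot x_j(t-\tau)$ be absorbed into a convex combination and never estimated in isolation. Two algebraic identities drive everything. First, because of the row-stochasticity \eqref{psi:conv} the self-term in \eqref{eq:prop} collapses, $\sum_{j\neq i}\psi_{ij}x_i=x_i$, so that \eqref{eq:prop} can be written as the forced, non-neutral ODE $\dot x_i+x_i=W_i$, where $W_i(t):=\sum_{j\neq i}\psi_{ij}z_j(t-\tau)$ is a convex combination of the delayed transmitted states; in particular $|W_i(t)|\le\max_j|z_j(t-\tau)|$. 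Second, substituting $\dot x_i=W_i-x_i$ into the definition of $z_i$ gives $z_i=(1-\lambda\tau)x_i+\lambda\tau W_i$, which by the hypothesis $\lambda\tau\le1$ is again a convex combination, now of $x_i(t)$ and $W_i(t)$.

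I would then control the nondecreasing running maximum $Z(t):=\sup_{-\tau\le s\le t}\max_j|z_j(s)|$. From \eqref{ass:M} one has $Z(0)\le M+\lambda\tau M=(1+\lambda\tau)M$. For the inductive step on $[n\tau,(n+1)\tau]$, Duhamel's formula for $\dot x_i+x_i=W_i$ together with $|W_i(s)|\le Z(t-\tau)$ for $s\le t$ gives $|x_i(t)|\le e^{-t}M+(1-e^{-t})Z(t-\tau)$; feeding this into $z_i=(1-\lambda\tau)x_i+\lambda\tau W_i$ writes $|z_i(t)|$ as a convex combination of $M$ and $Z(t-\tau)$ (the weights are $(1-\lambda\tau)e^{-t}$ and $1-(1-\lambda\tau)e^{-t}$), whence $|z_i(t)|\le\max\{M,Z(t-\tau)\}$. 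Since $Z(t-\tau)\le Z(n\tau)$ on this interval and $M\le(1+\lambda\tau)M$, the induction closes and $Z(t)\le(1+\lambda\tau)M$ for all $t$. The position bound in \eqref{claim:M} is then immediate from $|x_i(t)|\le e^{-t}M+(1-e^{-t})Z(t-\tau)\le(1+\lambda\tau)M$, and the derivative bound follows from $|\dot x_i(t)|=|W_i(t)-x_i(t)|\le|W_i(t)|+|x_i(t)|\le 2(1+\lambda\tau)M$; this already beats the claimed constant $3$, which therefore holds a fortiori.

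The delicate point is obtaining the sharp constant $(1+\lambda\tau)$ in the position estimate, and in particular covering the borderline case $\lambda\tau=1$. The naive bootstrap that bounds the forcing $\bPsii+\lambda\tau\,\tot{}{t}\bPsii$ of \eqref{eq:prop} term by term inflates it by a factor $1+2\lambda\tau$ at each step and so compounds to an exponentially growing constant; and estimating $x_i$ through the decomposition $x_i=(x_i-\lambda\tau\bPsii)+\lambda\tau\bPsii$ used for the Lyapunov functional, bounding the two pieces separately, loses a factor and yields only $\tfrac{1+\lambda\tau}{1-\lambda\tau}M$, which blows up as $\lambda\tau\to1$. The whole difficulty is dissolved by never separating these pieces and instead exploiting that \emph{both} $W_i$ and $z_i$ are honest convex combinations, so that the running maximum $Z$ can only be propagated, never amplified. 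A final, purely technical caveat: solutions of the neutral equation are only piecewise $C^1$, with $\dot x_i$ (hence $z_i$) possibly jumping at the points $n\tau$; phrasing the induction with suprema over the closed subintervals $[n\tau,(n+1)\tau]$ and using the Duhamel representation of $\dot x_i+x_i=W_i$, whose right-hand side is merely bounded and integrable, makes these jumps harmless.
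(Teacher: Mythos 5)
Your proof is correct, covers the borderline case $\lambda\tau=1$, and in fact yields the sharper derivative bound $2(1+\lambda\tau)M$, so the claimed constant $3$ holds a fortiori; but it is organized around a genuinely different device than the paper's argument. The paper also proceeds by the method of steps (setting $y_i^k(t):=x_i(t+(k-1)\tau)$) and also exploits the damping structure $\dot x_i = (\mbox{forcing}) - x_i$ coming from row-stochasticity, but it controls the neutral forcing by \emph{fully unrolling} it: an induction produces an explicit identity expressing $\dot y_i^{k+1}$ through all earlier steps, with coefficients given by products $\psi_{i,j_k}\cdots\psi_{j_2,j_1}$ weighted by powers of $\lambda\tau$, reaching back to $y^0$ and $\dot y^0$; stochasticity then collapses these sums into the telescoping series $(1-\lambda\tau)\sum_{k=0}^{K-1}(\lambda\tau)^k+(\lambda\tau)^{K}+(\lambda\tau)^{K+1}=1+(\lambda\tau)^{K+1}\le 1+\lambda\tau$, and a differential inequality $\tot{}{t}\bigl|y_i^{K+1}\bigr|\le \bigl(1+(\lambda\tau)^{K+1}\bigr)M-\bigl|y_i^{K+1}\bigr|$ closes the induction, with the derivative bound $\bigl(2+(\lambda\tau)^{K+1}\bigr)(1+\lambda\tau)M\le 3(1+\lambda\tau)M$ read off from the same expansion. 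Your transmitted-state variable $z_i=x_i+\lambda\tau\dot x_i$ achieves this telescoping implicitly and one step at a time: the two identities $\dot x_i+x_i=W_i$ and $z_i=(1-\lambda\tau)x_i+\lambda\tau W_i$ replace the paper's multi-index expansion, and the convex-combination structure propagates the running maximum of $|z|$ without amplification (your Duhamel step, writing $|z_i(t)|$ as a convex combination of $M$ and $Z(t-\tau)$ with weights $(1-\lambda\tau)e^{-t}$ and $1-(1-\lambda\tau)e^{-t}$, is exactly the integral form of the paper's differential inequality). What your route buys is the absence of nested bookkeeping and the better constant $2$; what the paper's route buys is that the explicit unrolled formula quantifies how the influence of the initial derivative data decays like $(\lambda\tau)^{k+1}$ across steps, information your one-step induction does not expose. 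Your regularity caveat about jumps of $\dot x_i$ at multiples of $\tau$ is equally present (and equally harmless) in both arguments.
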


\begin{proof}
Let us denote, for $k\in\N$,
\[
   y_i^k(t) := x_i(t+(k-1)\tau) \qquad\mbox{for } t\in[0,\tau],\quad i\in [N].
\]
Then \eqref{eq:prop} is rewritten in terms of $y_i^k$ as
\[
   \dot y_i^{k+1} = \sum_{j=1}^N \psi_{ij} \left( y_j^k + \lambda\tau \dot y_j^k - y_i^{k+1} \right).
\]
A simple induction argument shows that for all $k\in\N$ and $i\in [N]$,
\(   
   \nonumber
   \dot y_{i}^{k+1} = 
     (1-\lambda\tau) \left[ \sum_{j_k=1}^N \psi_{i,j_k} y_{j_k}^k 
       +  \lambda\tau \sum_{j_k=1}^N \sum_{j_{k-1}=1}^N \psi_{i,j_k} \psi_{j_{k},j_{k-1}} y_{j_{k-1}}^{k-1} \right.
       + \ldots \\
       \ldots + \left.
       (\lambda\tau)^{k-1} \sum_{j_k=1}^N \ldots \sum_{j_1=1}^N \psi_{i,j_k} \ldots \psi_{j_2,j_1} y_{j_1}^1 \right]  \label{mad} \\
           +   (\lambda\tau)^{k} \sum_{j_k=1}^N \ldots \sum_{j_0=1}^N \psi_{i,j_k} \ldots \psi_{j_1,j_0} y_{j_0}^0  \nonumber\\       
           +   (\lambda\tau)^{k+1} \sum_{j_k=1}^N \ldots \sum_{j_0=1}^N \psi_{i,j_k} \ldots \psi_{j_1,j_0} \dot y_{j_0}^0  - y_{i}^{k+1}.
           \nonumber
\)
We shall first prove the first claim of \eqref{claim:M}, i.e., that for all $k\in\N$,
\(   \label{claim:M:proof}
     |y_i^k(t)|  \leq (1+\lambda\tau) M \qquad\mbox{for all } t\in [0,\tau],\quad i\in [N].
\)
Obviously, $y_i^0 \equiv x_i^0$ for all $i\in [N]$, so that \eqref{claim:M:proof} holds for $k=0$ due to \eqref{ass:M}.
We proceed inductively, assuming that \eqref{claim:M:proof} holds for $k=0,\dots,K$.
With the Cauchy-Schwarz inequality, using $\lambda\tau\leq 1$ and \eqref{psi:conv}, we infer from \eqref{mad},
\[
     \frac12 \tot{}{t} \left|y_{i}^{K+1} (t)\right|^2 &\leq&
           \left( (1-\lambda\tau)\sum_{k=0}^{K-1} (\lambda\tau)^k + (\lambda\tau)^{K} + (\lambda\tau)^{K+1} \right) M \left|y_{i}^{K+1} (t)\right| - \left|y_{i}^{K+1} (t)\right|^2
           \\
           &=& \Bigl[ \left( 1+ (\lambda\tau)^{K+1} \right) M - \left|y_{i}^{K+1} (t) \right| \Bigr] \left|y_{i}^{K+1} (t) \right|
\]
for $t\in (0,\tau)$.
Therefore, for almost all $t\in (0,\tau)$,
\(    \label{ind:2}
     \tot{}{t} \left|y_{i}^{K+1} (t)\right| \leq  \left( 1+ (\lambda\tau)^{K+1} \right) M - \left|y_{i}^{K+1} (t) \right|.
\)
Then, integration in time gives
\[
   \left|y_{i}^{K+1} (t)\right| &\leq& e^{-t} \left|y_{i}^{K+1} (0)\right| + \left(1-e^{-t} \right) \left( 1+ (\lambda\tau)^{K+1} \right) M \\
      &\leq&  e^{-t} (1+\lambda\tau) M + \left(1-e^{-t} \right) \left( 1+ (\lambda\tau)^{K+1} \right) M  \\
      &\leq&  (1+\lambda\tau) M,
 \]
for $t\in[0,\tau]$,
where we used the induction hypothesis $\left|y_{i}^{K+1} (0)\right| = \left|y_{i}^{K} (\tau)\right| \leq (1+\lambda\tau) M$ for the second inequality
and $\lambda\tau\leq 1$ for the third one.

Finally, using \eqref{claim:M:proof} in \eqref{mad} gives
\[
   \left| \dot y_{i}^{K+1} \right| &\leq& \left( (1-\lambda\tau)\sum_{k=0}^{K-1} (\lambda\tau)^k + (\lambda\tau)^{K} + (\lambda\tau)^{K+1}  + 1 \right) (1+\lambda\tau) M \\
      &=& \left( 2 + (\lambda\tau)^{K+1} \right) (1+\lambda\tau) M
\]
for all $K\in\N$, which with $\lambda\tau\leq 1$ implies the second claim of \eqref{claim:M}.
\end{proof}

We now provide the proof of Theorem \ref{thm:prop}.

\begin{proof}
An integration of \eqref{L:diss} on the interval $(T_0,t)$ gives
\[
   (1-\gamma) (1-\lambda\tau\gamma) \int_{T_0}^t d_x(s)^2 \d s \leq \mathcal{L}(T_0) - \mathcal{L}(t) \leq \mathcal{L}(T_0).
\]
Therefore, since for $N\geq 3$ we have $\gamma<1$ by \eqref{def:gamma},
the squared diameter $d^2_x=d^2_x(t)$ must be integrable
as $t\to\infty$. Moreover, with \eqref{claim:M} we have
\[
   \left| \tot{}{t} d_x(t)^2 \right| \leq 2 |x_i-x_k| |\dot x_i - \dot x_k| \leq 24 (1+\lambda\tau)^2 M^2.
\]
Then, an application of the Barbalat lemma \cite{Barbalat} implies that $\lim_{t\to\infty} d_x(t) = 0$.
\end{proof}

\begin{remark} \label{rem:IJ}
Let us now explain why we had to adopt the assumption \eqref{IJ} of Theorem \ref{thm:prop}.
Observe that, due to the continuity of the trajectories $x_i=x_i(t)$,
there is an at most countable system of open, mutually disjoint
intervals $\{\mathcal{I}_\sigma\}_{\sigma\in\N}$ such that
$$
   \bigcup_{\sigma\in\N} \overline{\mathcal{I}_\sigma} = [0,\infty)
$$
and for each ${\sigma\in\N}$ there exist indices $i(\sigma)$, $k(\sigma)$
such that
\(   \label{dxik}
   d_x(t) = |x_{i(\sigma)}(t) - x_{k(\sigma)}(t)| \quad\mbox{for } t\in \mathcal{I}_\sigma.
\)
Obviously, assumption \eqref{IJ} means that the system $\{\mathcal{I}_\sigma\}_{\sigma}$
can be chosen of finite cardinality, with the last interval being $(T_0,\infty)$.

If assumption \eqref{IJ} was not met, i.e., the system $\{\mathcal{I}_\sigma\}_{\sigma\in\N}$ was countably infinite,
we would have to replace the functional \eqref{def:L} by
\[
   \mathcal{L}'(t) := \frac12 \left[ \left( x_{i(\sigma)} - \lambda\tau \bPsi^{i(\sigma)}\right) - \left(x_{k(\sigma)} - \lambda\tau \bPsi^{k(\sigma)} \right) \right]^2
       + \left[ \frac{1+\lambda\tau}{2\gamma} - \lambda\tau \right] \gamma^2  \int_{t-\tau}^t d_x(s)^2 \d s,
\]
with $i=i(\sigma)$, $k=k(\sigma)$ given by \eqref{dxik}.
Then, a minimal adaptation of the proof of Lemma \ref{lem:diss}
would give the dissipation estimate \eqref{L:diss} for $\mathcal{L}'=\mathcal{L}'(t)$,
for all $t\in \bigcup_{\sigma\in\N} \mathcal{I}_\sigma$, i.e., for almost all $t>0$.
However, the functional $\mathcal{L}'(t)$ is not globally continuous
since the terms $\bPsi^{i(\sigma)}$, $\bPsi^{k(\sigma)}$
may have jumps across the boundaries of the intervals $\mathcal{I}_\sigma$.
Although the jumps can be controlled in spirit of \eqref{geom_arg},
this control is not sufficient to finally conclude that $\lim_{t\to\infty} d_x(t) = 0$.
This is the reason why we had to admit the assumption \eqref{IJ}
in the formulation of Theorem \ref{thm:prop}.
\end{remark}

\section{Asymptotic consensus with reaction delay - proof of Theorem \ref{thm:react}}\label{sec:react}

Let us recall that Theorem \ref{thm:react} assumes the matrix of communication weights $(\psi_{ij})_{i,j\in [N]}$ to be
symmetric
and irreducible. Since the values of the diagonal entries $\psi_{ii}$ are irrelevant for
the dynamics of \eqref{eq:react}, we can formally set them such that all row and column sums are the same.
In particular, without loss of generality (up to an eventual rescaling of time), we shall assume that
$(\psi_{ij})_{i,j\in [N]}$ is a bi-stochastic matrix, i.e.,
\(  \label{psi:bistoch}
    \sum_{j=1}^N \psi_{ij} = \sum_{j=1}^N \psi_{ji} = 1 \qquad\mbox{for all } i\in [N],
\)
The symmetry
implies that the mean opinion $X=X(t)$ defined in \eqref{def:mean}
is conserved along the solutions of \eqref{eq:react}. Therefore, without loss of generality,
we shall assume that $X(t) \equiv 0$ for all $t\geq 0$.

For $t\geq 0$ we define the quantity
\(  \label{def:D}
   D(t):= \sum_{i=1}^N \sum_{j=1}^N {\psi}_{ij} |{x}_{j} - {x}_{i}|^{2},
\)
with the convention $\widetilde{D} := D(t-\tau)= \sum_{i=1}^N \sum_{j=1}^N {\psi}_{ij} |\widetilde{x}_{j} - \widetilde{x}_{i}|^{2}$.
We also introduce the short-hand notation
\(  \label{def:Phii}
   \Phi^i := \sum_{j=1}^N \psi_{ij} (x_j - x_i),
\)
and, again, $\bPhii = \Phi^i(t-\tau)$.

\begin{lemma} \label{lem:sd1}
For any $\eps>0$ and $\delta>0$ we have,
along the solutions of \eqref{eq:react} with bi-stochastic interaction weights \eqref{psi:bistoch},
\(  \label{est:sd1}
   \sum_{i=1}^N \bPhii \cdot x_i &\leq&
         \frac{\delta-1}{2} D(t-\tau)
           + \frac{(1+\eps)\tau}{2\delta} \int_{t-\tau}^{t} {D}(s-\tau)\,\d s \\
           &&\qquad
           + \frac{\lambda^2\tau^2}{\delta} \left(1+\eps^{-1}\right) \sum_{i=1}^N \left( \left| \Phi^i(t-\tau) \right|^2 + \left| \Phi^i(t-2\tau) \right|^2 \right),
           \nonumber
\)
for all $t>\tau$.
\end{lemma}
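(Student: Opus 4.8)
The plan is to start from a reformulation of the reaction-type system. With the notation \eqref{def:Phii}, equation \eqref{eq:react} becomes simply $\dot x_i(t) = \Phi^i(t-\tau) + \lambda\tau\,\dot\Phi^i(t-\tau)$, because the constant weights $\psi_{ij}$ let me pull the delay through the sum. The entire estimate then rests on one algebraic identity that exploits the symmetry $\psi_{ij}=\psi_{ji}$: for every $s$,
\[
   \sum_{i=1}^N \Phi^i(s)\cdot x_i(s) &=& -\tfrac12 D(s).
\]
I would prove this by swapping the summation indices $i\leftrightarrow j$ in $\sum_{i,j}\psi_{ij}(x_j-x_i)\cdot x_i$, invoking $\psi_{ij}=\psi_{ji}$, and averaging the original and index-swapped expressions; the cross terms reassemble into $-\tfrac12\sum_{i,j}\psi_{ij}|x_j-x_i|^2=-\tfrac12 D(s)$.

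Next I would insert $x_i(t-\tau)$ to split the quantity of interest,
\[
   \sum_i \bPhii\cdot x_i(t) &=& \sum_i \bPhii\cdot x_i(t-\tau) + \sum_i\bPhii\cdot\bigl(x_i(t)-x_i(t-\tau)\bigr).
\]
The first sum is exactly $-\tfrac12 D(t-\tau)$ by the identity above applied at time $t-\tau$. For the second, I write $x_i(t)-x_i(t-\tau)=\int_{t-\tau}^t\dot x_i(s)\,\d s$, substitute the reformulated right-hand side $\dot x_i(s)=\Phi^i(s-\tau)+\lambda\tau\,\dot\Phi^i(s-\tau)$, and integrate the derivative term explicitly to obtain
\[
   x_i(t)-x_i(t-\tau) &=& \int_{t-\tau}^t\Phi^i(s-\tau)\,\d s+\lambda\tau\bigl(\Phi^i(t-\tau)-\Phi^i(t-2\tau)\bigr).
\]

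The rest is purely Young and Cauchy--Schwarz. Writing $Q_i:=x_i(t)-x_i(t-\tau)$, I would apply $\bPhii\cdot Q_i\le\frac{\delta}{2}|\bPhii|^2+\frac{1}{2\delta}|Q_i|^2$, then expand $|Q_i|^2\le(1+\eps)\bigl|\int_{t-\tau}^t\Phi^i(s-\tau)\,\d s\bigr|^2+(1+\eps^{-1})\lambda^2\tau^2|\Phi^i(t-\tau)-\Phi^i(t-2\tau)|^2$ and bound $|\Phi^i(t-\tau)-\Phi^i(t-2\tau)|^2\le 2|\Phi^i(t-\tau)|^2+2|\Phi^i(t-2\tau)|^2$. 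Two auxiliary inequalities close the argument. By convexity (Jensen) together with the row-stochasticity of $(\psi_{ij})$ one has $\sum_i|\Phi^i(s)|^2\le D(s)$, which converts $\frac{\delta}{2}\sum_i|\bPhii|^2$ into $\frac{\delta}{2}D(t-\tau)$ and the last two terms into the stated $\frac{\lambda^2\tau^2}{\delta}(1+\eps^{-1})\sum_i(|\Phi^i(t-\tau)|^2+|\Phi^i(t-2\tau)|^2)$; note the factor $2$ from $|a-b|^2\le2|a|^2+2|b|^2$ cancels the $\frac{1}{2\delta}$ exactly. A Cauchy--Schwarz in the time integral gives $\sum_i\bigl|\int_{t-\tau}^t\Phi^i(s-\tau)\,\d s\bigr|^2\le\tau\int_{t-\tau}^t\sum_i|\Phi^i(s-\tau)|^2\,\d s\le\tau\int_{t-\tau}^t D(s-\tau)\,\d s$, producing the middle term with prefactor $\frac{(1+\eps)\tau}{2\delta}$. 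Combining $-\tfrac12 D(t-\tau)+\frac{\delta}{2}D(t-\tau)=\frac{\delta-1}{2}D(t-\tau)$ yields \eqref{est:sd1}.

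The only genuinely delicate point, I expect, is the bookkeeping of the three distinct delayed arguments ($t-\tau$, the running $s-\tau$ inside the integral, and $t-2\tau$) and making sure the constants line up so that the right-hand side matches \eqref{est:sd1} exactly; the conceptual core is the symmetry identity, after which everything is standard convexity and Young-type estimates, so I do not anticipate a real obstacle.
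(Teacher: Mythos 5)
Your proposal is correct and follows essentially the same route as the paper's proof: the same splitting of $\sum_i \bPhii\cdot x_i$ into the symmetrized part $-\tfrac12 D(t-\tau)$ plus the increment term with $x_i(t)-x_i(t-\tau)=\int_{t-\tau}^t\Phi^i(s-\tau)\,\d s+\lambda\tau\bigl(\Phi^i(t-\tau)-\Phi^i(t-2\tau)\bigr)$, followed by the same Young, $(1+\eps)$-weighted square, Cauchy--Schwarz and Jensen estimates, with all constants matching \eqref{est:sd1}. The only cosmetic difference is that you apply Young per index $i$ and then Jensen to bound $\sum_i|\bPhii|^2$ by $D(t-\tau)$, whereas the paper applies the weighted Young inequality directly inside the double sum over $i,j$; both yield the identical term $\tfrac{\delta}{2}D(t-\tau)$.
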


\begin{proof}
With \eqref{eq:react} we have
\[
   \sum_{i=1}^N \bPhii \cdot x_i
     &=& \sum_{i=1}^N \sum_{j=1}^N \psi_{ij} (\widetilde x_j - \widetilde x_i) \cdot x_i \\
     &=& \sum_{i=1}^N \sum_{j=1}^N \psi_{ij} (\widetilde x_j - \widetilde x_i) \cdot \widetilde x_i
      + \sum_{i=1}^N \sum_{j=1}^N \psi_{ij} (\widetilde x_j - \widetilde x_i) \cdot (x_i - \widetilde x_i).
\]
For the first term of the right-hand side we apply the standard
symmetrization trick (exchange of summation indices $i\leftrightarrow j$, using the symmetry of ${\psi}_{ij} = {\psi}_{ji}$),
\[
      \sum_{i=1}^N \sum_{j=1}^N \psi_{ij} (\widetilde x_j - \widetilde x_i) \cdot \widetilde x_i
      =  - \frac{1}{2} \sum_{i=1}^N \sum_{j=1}^N \psi_{ij} |\widetilde x_j - \widetilde x_i|^2
      = - \frac{1}{2} D(t-\tau) \,.
\]
For the second term we use the Young inequality with $\delta>0$,
\[
   \left|
      \sum_{i=1}^N \sum_{j=1}^N \psi_{ij} (\widetilde x_j - \widetilde x_i) \cdot (x_i - \widetilde x_i) \right|
     &\leq&
       \frac{\delta}{2} \sum_{i=1}^N \sum_{j=1}^N \psi_{ij} |\widetilde x_j - \widetilde x_i|^2
         + \frac{1}{2\delta} \sum_{i=1}^N \sum_{j=1}^N \psi_{ij} |x_i - \widetilde x_i|^2  \\
     &=&    
          \frac{\delta}{2} D(t-\tau)
         + \frac{1}{2\delta} \sum_{i=1}^N |x_i - \widetilde x_i|^2,
\]
where we used \eqref{psi:bistoch} in the second line.
Hence,
\(  \label{lem:sd1:1}
   \sum_{i=1}^N \bPhii \cdot x_i \leq
          \frac{\delta-1}{2} D(t-\tau) + \frac{1}{2\delta} \sum_{i=1}^N |x_i - \widetilde x_i|^2.
\)
Next, for $t>\tau$ we use \eqref{eq:react} to evaluate the difference
$x_{i}-\widetilde{x}_{i}$,
\[
   x_{i}-\widetilde{x}_{i} &=& \int_{t-\tau}^{t} \dot x_{i}(s)\, \d s \\
    &=&  \int_{t-\tau}^{t} \Phi^i(s-\tau) + \lambda\tau \dot\Phi^i(s-\tau)  \, \d s \\
    &=&  \int_{t-\tau}^{t} \Phi^i(s-\tau) \, \d s + \lambda\tau \left( \Phi^i(t-\tau) - \Phi^i(t-2\tau) \right).
\]
Taking the square, using the inequality $(a+b)^2 \leq (1+\eps)a^2 + (1+\eps^{-1})b^2$ with $\eps>0$,
and summing over $i\in [N]$ gives
\[
   \frac{1}{2\delta} \sum_{i=1}^N |x_i - \widetilde{x}_{i}|^{2} &\leq&
       \frac{1+\eps}{2\delta} \sum_{i=1}^N \left|  \int_{t-\tau}^{t} \Phi^i(s-\tau)\, \d s  \right|^2
         + \frac{\lambda^2\tau^2}{2\delta} \left(1+\eps^{-1}\right) \sum_{i=1}^N \left| \Phi^i(t-\tau) - \Phi^i(t-2\tau)  \right|^2 \\
       &\leq&
       \frac{(1+\eps)\tau}{2\delta} \sum_{i=1}^N  \int_{t-\tau}^{t} \left| \Phi^i(s-\tau)  \right|^2 \, \d s
         + \frac{\lambda^2\tau^2}{\delta} \left(1+\eps^{-1}\right) \sum_{i=1}^N \left( \left| \Phi^i(t-\tau) \right|^2 + \left| \Phi^i(t-2\tau)  \right|^2 \right),
\]
where we used the Cauchy-Schwarz inequality in the second line.
The first term of the right-hand side is estimated using the discrete Jensen inequality, recalling the stochasticity \eqref{psi:bistoch},
\[
     \frac{(1+\eps)\tau}{2\delta} \sum_{i=1}^N \int_{t-\tau}^{t} | \Phi^i(s-\tau) |^2 \, \d s
   &=&
     \frac{(1+\eps)\tau}{2\delta} \sum_{i=1}^N \int_{t-\tau}^{t} \left| \sum_{j=1}^N {\psi}_{ij} ({x}_{j}(s-\tau) - {x}_{i}(s-\tau)) \right|^2 \, \d s \\   
   &\leq&
     \frac{(1+\eps)\tau}{2\delta} \sum_{i=1}^N \int_{t-\tau}^{t} \sum_{j=1}^N {\psi}_{ij} |{x}_{j}(s-\tau) - {x}_{i}(s-\tau)|^2 \, \d s \\
   &=&
     \frac{(1+\eps)\tau}{2\delta} \int_{t-\tau}^{t} {D}(s-\tau)\,\d s.
\]
Consequently, we arrive at
\[
   \frac{1}{2\delta} \sum_{i=1}^N |x_i - \widetilde{x}_{i}|^{2} \leq 
      \frac{(1+\eps)\tau}{2\delta} \int_{t-\tau}^{t} {D}(s-\tau)\,\d s
         + \frac{\lambda^2\tau^2}{\delta} \left(1+\eps^{-1}\right) \sum_{i=1}^N \left( \left| \Phi^i(t-\tau) \right|^2 + \left| \Phi^i(t-2\tau)  \right|^2 \right),
\]
and inserting into \eqref{lem:sd1:1} gives \eqref{est:sd1}.
\end{proof}

For $\eps>0$ and $\delta>0$ to be specified later, we define the functional
\(  \label{Lyap:sd}
   \mathcal{L}_{\eps,\delta} (t) := \frac12 \sum_{i=1}^N \left( x_i - \lambda\tau \bPhii \right)^2
      + \frac{\lambda^2\tau^2}{\delta} \left( 1 + \eps^{-1} \right) \int_{t-\tau}^t \sum_{i=1}^N \left| \Phi^i (s-\tau) \right|^2 \d s \\
      +\, \frac{(1+\eps)\tau}{2\delta} \int_{t-\tau}^t \int_\theta^t D(s-\tau) \d s\d\theta.
      \nonumber
\)

\begin{lemma}\label{lem:Led}
For $\eps:=2\lambda$ and $\delta:= (1 + 2\lambda)\tau$, we have
along the solutions of \eqref{eq:react} with bi-stochastic interaction weights \eqref{psi:bistoch},
\(  \label{Led:diss}
   \tot{}{t} \mathcal{L}_{\eps,\delta} (t) \leq  \left( (1+\lambda)\tau - \frac12 \right) D(t-\tau),
\)
for all $t> \tau$, with $\mathcal{L}_{\eps,\delta}=\mathcal{L}_{\eps,\delta}(t)$ defined in \eqref{Lyap:sd}.
\end{lemma}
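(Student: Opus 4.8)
The plan is to differentiate the three constituent terms of $\mathcal{L}_{\eps,\delta}$ in \eqref{Lyap:sd} one by one and then add them, exploiting the \emph{neutral} cancellation that motivated the quadratic term. Writing \eqref{eq:react} in the compact form $\dot x_i(t) = \Phi^i(t-\tau) + \lambda\tau\dot\Phi^i(t-\tau)$, the decisive identity is
$$
   \tot{}{t}\left( x_i - \lambda\tau\bPhii \right) = \dot x_i - \lambda\tau\dot\Phi^i(t-\tau) = \bPhii,
$$
so that the leading quadratic term of the functional satisfies
$$
   \tot{}{t}\,\frac12\sum_{i=1}^N\left( x_i - \lambda\tau\bPhii \right)^2
      = \sum_{i=1}^N \bPhii\cdot x_i - \lambda\tau\sum_{i=1}^N\left|\bPhii\right|^2.
$$
I would bound the first sum directly by Lemma \ref{lem:sd1}, which supplies a negative multiple of $D(t-\tau)$, the nonlocal term $\int_{t-\tau}^t D(s-\tau)\,\d s$, and the boundary squares $\sum_i\bigl(|\Phi^i(t-\tau)|^2 + |\Phi^i(t-2\tau)|^2\bigr)$; the second sum is the dissipative contribution $-\lambda\tau\sum_i|\bPhii|^2$.

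Next I would differentiate the two memory integrals. The fundamental theorem of calculus turns the derivative of $\int_{t-\tau}^t\sum_i|\Phi^i(s-\tau)|^2\,\d s$ into $\sum_i\bigl(|\Phi^i(t-\tau)|^2 - |\Phi^i(t-2\tau)|^2\bigr)$, weighted by $\frac{\lambda^2\tau^2}{\delta}(1+\eps^{-1})$. The double integral $\int_{t-\tau}^t\int_\theta^t D(s-\tau)\,\d s\,\d\theta$ is the delicate one: applying the Leibniz rule to the two $t$-dependent outer limits and to the inner upper limit, the inner integrand at $\theta=t$ vanishes, the lower endpoint $\theta=t-\tau$ reproduces exactly $-\int_{t-\tau}^t D(s-\tau)\,\d s$, and differentiation of the inner upper limit gives $+\tau D(t-\tau)$. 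This is precisely the structure designed so that the double integral serves as a primitive whose derivative annihilates the nonlocal term produced by Lemma \ref{lem:sd1}.

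The final step is the accounting, carried out with $\eps$ and $\delta$ kept symbolic. Upon summing the three derivatives, the contributions proportional to $\int_{t-\tau}^t D(s-\tau)\,\d s$ cancel by construction, and the $|\Phi^i(t-2\tau)|^2$ terms cancel between Lemma \ref{lem:sd1} and the derivative of the first memory integral. What remains is $\bigl(2\frac{\lambda^2\tau^2}{\delta}(1+\eps^{-1}) - \lambda\tau\bigr)\sum_i|\Phi^i(t-\tau)|^2$ together with $\bigl(\frac{\delta-1}{2} + \frac{(1+\eps)\tau^2}{2\delta}\bigr)D(t-\tau)$. Substituting $\eps=2\lambda$ and $\delta=(1+2\lambda)\tau$ yields $\frac{\lambda^2\tau^2}{\delta}(1+\eps^{-1}) = \frac{\lambda\tau}{2}$, so the coefficient of $\sum_i|\Phi^i(t-\tau)|^2$ vanishes, while the coefficient of $D(t-\tau)$ collapses to $(1+\lambda)\tau - \frac12$, which is exactly \eqref{Led:diss}.

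The main obstacle I anticipate is the simultaneous bookkeeping of these cancellations. The pair $\eps=2\lambda$, $\delta=(1+2\lambda)\tau$ is not a convenient choice made at the end but is forced by requiring that both the $|\Phi^i(t-\tau)|^2$ terms and the nonlocal integral disappear at once, so it is essential to keep the parameters symbolic until the last substitution. The single genuinely error-prone computation is the Leibniz differentiation of the double integral: a sign or limit slip there would wreck the cancellation of the nonlocal memory term on which the whole estimate hinges.
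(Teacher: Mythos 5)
Your proof is correct and follows essentially the same route as the paper: the same neutral cancellation $\tot{}{t}\left( x_i - \lambda\tau\bPhii \right) = \bPhii$, the same application of Lemma \ref{lem:sd1}, and the same differentiation of the two memory integrals with exactly the cancellations you describe. The only (harmless) difference is the endgame: the paper keeps $\eps,\delta$ free, applies the discrete Jensen inequality $\sum_{i=1}^N \left|\Phi^i(t-\tau)\right|^2 \leq D(t-\tau)$ under the sign condition \eqref{cond:lted}, and then minimizes in $\eps$ and $\delta$ to \emph{discover} the choices $\eps=2\lambda$, $\delta=(1+2\lambda)\tau$ (verifying afterwards that \eqref{cond:lted} holds with equality), whereas you substitute the prescribed values directly and observe that the coefficient of $\sum_{i=1}^N \left|\Phi^i(t-\tau)\right|^2$ vanishes identically, which makes the Jensen step and the sign condition unnecessary for the verification.
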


\begin{proof}
Using \eqref{eq:react}, we have for all $i\in [N]$,
\[
   \frac12 \tot{}{t} \left( x_i - \lambda\tau \bPhii \right)^2 =
      \left( x_i - \lambda\tau \bPhii \right) \cdot \bPhii
       = - \lambda\tau \left| \bPhii \right|^2 + \bPhii\cdot x_i.
\]
Summing over $i\in [N]$ and using \eqref{est:sd1}, recalling the notation $\bPhii = \Phi^i(t-\tau)$,
\[
   \frac12 \tot{}{t} \sum_{i=1}^N \left( x_i - \lambda\tau \bPhii \right)^2 &\leq&
            \frac{\delta-1}{2} D(t-\tau)
           + \frac{(1+\eps)\tau}{2\delta} \int_{t-\tau}^{t} {D}(s-\tau)\,\d s \\
           && \qquad
           + \left( \frac{\lambda^2\tau^2}{\delta}\left(1+\eps^{-1}\right) - \lambda\tau \right) \sum_{i=1}^N \left| \Phi^i(t-\tau) \right|^2 \\
           && \qquad\qquad
              + \frac{\lambda^2\tau^2}{\delta} \left(1+\eps^{-1}\right) \sum_{i=1}^N \left| \Phi^i(t-2\tau)  \right|^2.
\]
Noting that
\[
   \frac{\lambda^2\tau^2}{\delta} \left(1+\eps^{-1}\right)  \tot{}{t} \int_{t-\tau}^t \sum_{i=1}^N \left| \Phi^i (s-\tau) \right|^2 \d s
      = \frac{\lambda^2\tau^2}{\delta}  \left(1+\eps^{-1}\right) \sum_{i=1}^N \left( \left| \Phi^i (t-\tau) \right|^2 - \left| \Phi^i (t-2\tau) \right|^2 \right),
\]
and
\[
    \frac{(1+\eps)\tau}{2\delta}  \tot{}{t} \int_{t-\tau}^t \int_\theta^t D(s-\tau) \d s\d\theta =
      \frac{(1+\eps)\tau^2}{2\delta} D(t-\tau) - \frac{(1+\eps)\tau}{2\delta} \int_{t-\tau}^t D(s-\tau) \d s,
\]
we have
\[
   \tot{}{t} \mathcal{L}_{\eps,\delta}(t) \leq \left( \frac{\delta-1}{2} + \frac{(1+\eps)\tau^2}{2\delta} \right) D(t-\tau)
      + \left( \frac{2\lambda^2\tau^2}{\delta} \left(1+\eps^{-1}\right) - \lambda\tau \right) \sum_{i=1}^N \left| \Phi^i(t-\tau) \right|^2.
\]
Next, the discrete Jensen inequality, recalling the stochasticity \eqref{psi:bistoch}, gives
\[
   \sum_{i=1}^N \left| \Phi^i(t-\tau) \right|^2 = \sum_{i=1}^N \left| \sum_{j=1}^N \psi_{ij} (\wtx_j-\wtx_i) \right|^2
     \leq \sum_{i=1}^N \sum_{j=1}^N \psi_{ij} |\wtx_j-\wtx_i|^2 = D(t-\tau).
\]
Consequently, assuming that $\lambda$, $\tau$ and $\eps$, $\delta$ are such that
\( \label{cond:lted}
    \frac{2\lambda^2\tau^2}{\delta} \left(1+\eps^{-1}\right) - \lambda\tau \geq 0,
\)
we have
\[
   \tot{}{t} \mathcal{L}_{\eps,\delta} (t) \leq
       \left( \frac{\delta-1}{2} + \frac{(1+\eps)\tau^2}{2\delta} + \frac{2\lambda^2\tau^2}{\delta} \left(1+\eps^{-1}\right) - \lambda\tau \right) D(t-\tau).
\]
We first minimize the right-hand side in $\eps>0$, which leads to $\eps:=2\lambda$ and
\[
   \tot{}{t} \mathcal{L}_{\eps,\delta} (t) \leq
       \left( \frac{\delta-1}{2} + \frac{(1+2\lambda)^2\tau^2}{2\delta} - \lambda\tau \right) D(t-\tau).
\]
Now we minimize the right-hand side in $\delta>0$. This gives the optimal choice $\delta:=(1+2\lambda)\tau$ and
\[
   \tot{}{t} \mathcal{L}_{\eps,\delta} (t) \leq  \left( (1+\lambda)\tau - \frac12 \right) D(t-\tau),
\]
which is \eqref{Led:diss}. It remains to check that 
condition \eqref{cond:lted} is verified.
Remarkably, a simple calculation shows that with $\eps:=2\lambda$ and $\delta:=(1+2\lambda)\tau$,
\eqref{cond:lted} holds with equality.
\end{proof}

We are now in position to present the proof of Theorem \ref{thm:react}.
\begin{proof}
We first note that, assuming $X=0$, the only stationary solution of \eqref{eq:react}
is the trivial solution $x_i=0$ for all $i\in [N]$.
Indeed, the stationary version of \eqref{eq:react} can be written as
\(  \label{Lx}
   -L \mathbf{x} = 0,
\)
where $\mathbf{x}=(x_1,\dots,x_N)$ and $L\in\R^{N\times N}$ is the discrete Laplacian
\[
   L_{ij} := -\psi_{ij} \quad\mbox{for}\quad i\neq j,\qquad
   L_{ii} := \sum_{j\neq i} \psi_{ij} \quad\mbox{for}\quad i\in [N].
\]
Since $(\psi_{ij})_{i,j\in [N]}$ is assumed to be irreducible,
basic theory \cite{Mohar} states that
the null space of $L$ is the diagonal
$\left\{ (x,\ldots,x)\in\R^{Nd}; x\in\R^d \right\}$.
But then, the assumption $X=0$ implies that
\eqref{Lx} holds if and only if $\mathbf{x}=0$.

Consequently, global asymptotic consensus for \eqref{eq:react}
is equivalent to the asymptotic stability of the trivial steady state $x_i=0$ for all $i\in [N]$.
But the asymptotic stability is implied by the fact that, if \eqref{cond:react} holds,
then, by Lemma \ref{lem:Led}, $\mathcal{L}_{\eps,\delta}$ with $\eps:=2\lambda$ and $\delta:= (1 + 2\lambda)\tau$
is a Lyapunov functional for \eqref{eq:react}.
Indeed, due to the irreducibility of $(\psi_{ij})_{i,j\in [N]}$,
\[
   D(t) = \sum_{i=1}^N \sum_{j=1}^N {\psi}_{ij} |{x}_{j} - {x}_{i}|^{2} = L\mathbf{x}\cdot\mathbf{x}
\]
vanishes if and only if $\mathbf{x}=0$.
Consequently, the right-hand side of \eqref{Led:diss}
is nonpositive and zero only for the trivial steady state.
An application of the Lyapunov theorem for systems of neutral functional differential equations,
see, e.g., \cite[Theorem 8.1 of Section 9]{Hale-book}, provides then the sought global
asymptotic stability of the trivial steady state $x_i=0$ for all $i\in [N]$.
\end{proof}

\section*{Acknowledgment}
JH acknowledges the support of the KAUST baseline funds.



\end{document}